\documentclass[12pt]{amsart}
\usepackage{amssymb,latexsym}
\usepackage{enumerate}

\makeatletter
\@namedef{subjclassname@2010}{%
  \textup{2010} Mathematics Subject Classification}
\makeatother


\newtheorem{thm}{Theorem}[section]
\newtheorem{cor}[thm]{Corollary}
\newtheorem{lem}[thm]{Lemma}

\newtheorem{prop}[thm]{Proposition}



\theoremstyle{definition}
\newtheorem{defin}[thm]{Definition}
\newtheorem{rem}[thm]{Remark}
\newtheorem{exa}[thm]{Example}



\numberwithin{equation}{section}


\frenchspacing

\textwidth=13.5cm
\textheight=23cm
\parindent=16pt
\oddsidemargin=-0.5cm
\evensidemargin=-0.5cm
\topmargin=-0.5cm



\DeclareMathOperator{\Char}{char}
\DeclareMathOperator{\Gal}{Gal}

\newcommand{\inv}{^{-1}}
\newcommand{\isom}{\cong}
\newcommand{\sep}{\mathrm{sep}}
\newcommand{\sig}{\sigma}
\newcommand{\dbC}{\mathbb{C}}
\newcommand{\dbF}{\mathbb{F}}
\newcommand{\dbZ}{\mathbb{Z}}
\newcommand{\grm}{\mathfrak{m}}
\newcommand{\calL}{\mathcal{L}}


\begin{document}


\baselineskip=17pt



\title[Small Galois groups]{Small Galois groups that encode valuations}

\author[I. Efrat]{Ido Efrat}
\address{Mathematics Department\\
Ben-Gurion University of the Negev\\
P.O.\ Box 653, Be'er-Sheva 84105\\
Israel}
\email{efrat@math.bgu.ac.il}

\author[J. Min\'a\v c]{J\'an Min\'a\v c}
\address{Mathematics Department\\
University of Western Ontario\\
London\\ Ontario\\ Canada N6A 5B7}
\email{minac@uwo.ca}

\date{}

 
\begin{abstract}
Let $p$ be a prime number and let $F$ be a field containing a root of unity of order $p$.
We prove that a certain relatively small canonical Galois group
$(G_F)_{[3]}$ over $F$ encodes the valuations on $F$ whose
value group is not $p$-divisible and which satisfy a variant of Hensel's lemma.
\end{abstract}


\subjclass[2010]{Primary 12J10; Secondary 12E30}

\keywords{valuations, Galois groups, Galois cohomology,  Milnor $K$-theory,
$W$-group, rigid elements}

\maketitle

\section{Introduction}
A repeated phenomena in Galois theory is that essential arithmetical information on a field is encoded
in the group-theoretic structure of its canonical Galois groups.
A prototype of this phenomena is the classical Artin--Schreier theorem: a field $F$ has an ordering if and only if
its absolute Galois group $G_F=\Gal(F_\sep/F)$ contains a (non-trivial) involution.
As shown by Becker \cite{Becker74}, the same holds when $G=G_F$ is replaced by its maximal pro-$2$ quotient $G(2)$.
Moreover, the second author and Spira \cite[Th.\ 2.7]{MinacSpira90} established
a similar correspondence for an even smaller
pro-$2$ Galois group of $F$, the \textbf{$W$-group} of $F$.

In this paper we consider a generalization of the $W$-group to the pro-$p$ context,
and prove an analogous result for valuations.
Here $p$ is an arbitrary fixed prime number, and we assume that $F$ contains a root of unity of order $p$
(in particular, $\Char\,F\neq p$).
We set $G^{(2)}=G^p[G,G]$ and $G_{(3)}=G^{\delta p}[G^{(2)},G]$, where $\delta=1$ if $p>2$, and $\delta=2$
if $p=2$.
The pro-$p$ Galois group we consider is $G_{[3]}=G/G_{(3)}$.
It has exponent $\delta p$, and when $p=2$ it coincides with the $W$-group of $F$ \cite[Remark 2.1(1)]{EfratMinac2}.

Of course, a field always carries the trivial valuation, so one is only interested in valuations $v$ satisfying certain
natural requirements.
In the pro-$p$ context, such requirements on $v$ are:
\begin{enumerate}
\item[(i)]
$v(F^\times)\neq pv(F^\times)$;
\item[(ii)]
\textbf{$(F^\times)^p$-compatibility}: $1+\grm_v\leq (F^\times)^p$, where $1+\grm_v$ is the group of
$1$-units of $v$, i.e., all elements $x$ of $F$ with $v(x-1)>0$.
\end{enumerate}
Thus (i) is a strong form of non-triviality, whereas (ii) is a variant of Hensel's lemma.
Indeed, when the residue field $\bar F_v$ has characteristic not $p$, (ii) is equivalent to the validity of
Hensel's lemma relative to the maximal pro-$p$ extension $F(p)$ (\cite[Prop.\ 1.2]{Wadsworth83},
\cite[Prop.\ 18.2.4]{EfratBook}).

Our main result (Corollary \ref{valuations and the dual}) is that, under a finiteness assumption
and the hypothesis that $-1$ is a square if $p=2$,
there exists a valuation $v$ on $F$ satisfying (i) and (ii) above
if and only if the center $Z(G_{[3]})$ has a nontrivial image in $G^{[2]}=G/G^{(2)}$.

Note that when  $\Char\,\bar F_v\neq p$, conditions (i) and (ii) give a description
of the full maximal pro-$p$ Galois group $G_F(p)=\Gal(F(p)/F)$ of $F$ as a semi-direct product
$\dbZ_p^m\rtimes G_{\bar F_v}(p)$ where $m=\dim_{\dbF_p}(v(F^\times)/pv(F^\times))$
and the action is given by the cyclotomic character \cite[Example 22.1.6]{EfratBook}.

The proof of the main theorem is based on two key ingredients.
First, results of Villegas, Spira and the authors (see Theorems \ref{intersection for p odd}
and \ref{intersection for p=2} below) give an explicit list $\calL_p$ of small finite $p$-groups such that,
for $G=G_F$ as above,
\[
G_{(3)}=\bigcap\{N\trianglelefteq G\ | \ G/N\in\calL_p\}.
\]

A second ingredient is the notion of \textbf{$p$-rigid} elements in $F$
(see \S\ref{section on rigidity} for the definition).
In a series of works by Arason, Elman, Hwang, Jacob, Ware, and the first author
(see \cite{Jacob81},  \cite{Ware81}, \cite{ArasonElmanJacob87}, \cite{HwangJacob95}, \cite{Efrat99},
\cite[Ch.\ 26]{EfratBook}, \cite{Efrat07})
it was shown that there exist valuations satisfying (i) and (ii) if and only if $F$ has sufficiently many
$p$-rigid elements.
The dual notion in $G_{[2]}$ under the Kummer pairing can be interpreted, using certain Galois embedding problems,
in terms of the groups in $\calL_p$.

Connections between the group $G_{[3]}$ and valuations were earlier studied in \cite[\S7--8]{MaheMinacSmith04}
(for $p=2$) and also announced in \cite{Pop06b}.
This is also related to works by Bogomolov, Tschinkel, and Pop
(\cite{Bogomolov91}, \cite{Bogomolov92}, \cite{BogomolovTschinkel08}, \cite{Pop06a}),
showing that for function fields $F$ over algebraically closed fields, such ``tame" valuations can be recovered
from the larger Galois group $G/[[G,G],G]$.
For a nice survey with more references see \cite{BogomolovTschinkel10}.
Some other connections between rigidity and small Galois groups were previously
also investigated in \cite{AdemGaoKaragueuzianMinac01} and \cite{LeepSmith02}, and in connection
with absolute or maximal pro-$p$ Galois groups in, e.g.,
\cite{Efrat99}, \cite{Efrat00}, \cite{EnglerNogueira94}, and \cite{Koenigsmann03}.

Underlying our results is the fact, proved in \cite{EfratMinac2} (extending earlier
results in \cite{CheboluEfratMinac}), that for $G=G_F$ with $F$ as above, $G_{[3]}$ determines the
Galois cohomology ring $H^*(G,\dbZ/q)$, and is in fact the minimal Galois group of $F$ with this property.

For other works demonstrating the importance of the quotient $G_{[3]}$ in the Galois theory
of algebraic number fields see,  e.g.,  \cite{Koch02}, \cite{Morishita04}, \cite{Vogel05}.

\section{Galois-theoretic preliminaries}
We fix a prime number $p$.
For $p>2$ let
\[
H_{p^3}=\bigl\langle  r,s,t\ \bigm|\  r^p=s^p=t^p=[r,t]=[s,t]=1,\ [r,s]=t \bigr\rangle
\]
be the nonabelian group of order $p^3$ and exponent $p$ (the \textbf{Heisenberg group}).
Also let $D_4$ be the dihedral group of order $8$.
To make the discussion uniform, we set
\begin{equation}
\label{def of G bar}
\bar G=\begin{cases}
H_{p^3} & p>2 \\
D_4 & p=2.
\end{cases}
\end{equation}
In both cases, the Frattini subgroup of $\bar G$ is its center $Z(\bar G)$, and one has $\bar G/Z(\bar G)\isom(\dbZ/p)^2$.
Moreover, this is the unique quotient of $\bar G$ isomorphic to $(\dbZ/p)^2$.
Also, every proper subgroup of $\bar G$ is abelian.

From now on let $F$ be a field containing a fixed root of unity $\zeta_p$ of order $p$,
and let $G=G_F$ be its absolute Galois group.
The following theorem was proved in \cite[Thm. D]{EfratMinac2}.

\begin{thm}
\label{intersection for p odd}
Assume that $p>2$.
Then $G_{(3)}$ is the intersection of all normal open subgroups $N$ of $G$
such that $G/N$ is isomorphic to $\{1\}$, $\dbZ/p$ or $H_{p^3}$.
\end{thm}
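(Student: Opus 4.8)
The plan is to prove the two inclusions separately, after first simplifying the definition of $G_{(3)}$. For $p>2$ we have $\delta=1$, so $G_{(3)}=G^p[G^{(2)},G]=G^p[G^p[G,G],G]$. Using $[G^p[G,G],G]=[G^p,G][[G,G],G]$ together with the commutator identity $[x^p,y]\equiv[x,y]^p\pmod{[[G,G],G]}$ and the inclusion $[G,G]^p\subseteq G^p$, one finds $G_{(3)}=G^p[[G,G],G]$. Thus $G_{[3]}=G/G_{(3)}$ is the maximal quotient of $G$ of exponent dividing $p$ and nilpotency class at most $2$. Since every group in $\calL_p=\{\{1\},\dbZ/p,H_{p^3}\}$ has exponent dividing $p$ and class at most $2$, any open normal $N$ with $G/N\in\calL_p$ satisfies $G^p\subseteq N$ and $[[G,G],G]\subseteq N$, hence $G_{(3)}\subseteq N$; taking the intersection gives the easy inclusion $G_{(3)}\subseteq\bigcap\{N\mid G/N\in\calL_p\}$.

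For the reverse inclusion I would work inside the quotient $\bar W=G_{[3]}$ and show that the homomorphisms from $\bar W$ onto groups in $\calL_p$ separate points; equivalently, that $\bigcap\{N/G_{(3)}\mid G/N\in\calL_p\}$ is trivial. Write $V=\bar W/[\bar W,\bar W]\isom G^{[2]}$, an elementary abelian pro-$p$ group, and note that $[\bar W,\bar W]$ is central and elementary abelian. An element of $\bar W$ lying outside $[\bar W,\bar W]$ has nontrivial image in $V$, hence is detected by some character $\bar W\to\dbZ/p$; so it remains only to separate the nontrivial elements of $[\bar W,\bar W]$, and for these one must use the quotients isomorphic to $H_{p^3}$.

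Here I would invoke the Heisenberg embedding problem. A homomorphism $\bar W\to H_{p^3}$ is governed by the pair of characters $\chi_1,\chi_2\in H^1(G,\dbZ/p)$ giving its composite with $H_{p^3}\to H_{p^3}/Z(H_{p^3})\isom(\dbZ/p)^2$, and by the classical solvability criterion such a lift exists precisely when the cup product $\chi_1\cup\chi_2$ vanishes in $H^2(G,\dbZ/p)$. Identifying $H^1(G,\dbZ/p)$ with the dual $V^*$ via the Kummer pairing, the restriction of such a homomorphism to $[\bar W,\bar W]$ is, on each commutator, the value of the alternating form $\chi_1\wedge\chi_2$; moreover any such homomorphism nonzero on a commutator has nonabelian image and is therefore automatically onto $H_{p^3}$. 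Thus the problem reduces to showing that the decomposable forms $\chi_1\wedge\chi_2$ with $\chi_1\cup\chi_2=0$ span the dual of $[\bar W,\bar W]$, equivalently that they span the kernel of the cup product map $\wedge^2 H^1(G,\dbZ/p)\to H^2(G,\dbZ/p)$.

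The last step is the main obstacle, and it is exactly where the hypothesis $G=G_F$ with $\zeta_p\in F$ enters: for a general profinite group of exponent $p$ and class $2$ the analogous kernel need not be spanned by decomposables, so the statement would fail. To carry it out I would appeal to the structure of the Galois cohomology ring $H^*(G,\dbZ/p)$ as a quadratic algebra generated in degree one whose degree-two relations are generated by vanishing symbols, together with the duality (established in \cite{CheboluEfratMinac}, \cite{EfratMinac2}) identifying the defining relations of $G_{[3]}$ with the kernel of the cup product. This duality converts the spanning statement for $\ker(\cup)$ into the desired point separation in $[\bar W,\bar W]$, which completes the reverse inclusion and hence the proof.
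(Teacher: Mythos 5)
The paper itself gives no proof of this theorem: it is quoted verbatim from \cite[Thm.\ D]{EfratMinac2}, so the real comparison is with that reference, and your outline reconstructs, correctly, essentially the argument that lies behind the citation. Your preliminary reduction is sound: for $p>2$ one indeed has $G_{(3)}=G^p[[G,G],G]$, so $G_{[3]}$ is the maximal quotient of exponent $p$ and class at most $2$, giving the easy inclusion; characters separate elements outside $[\bar W,\bar W]$; and homomorphisms $G\to H_{p^3}$ correspond, by the classical central embedding-problem criterion (Lemma \ref{embedding problems}(a) in Kummer language), to pairs $\chi_1,\chi_2$ with $\chi_1\cup\chi_2=0$, their restrictions to $[\bar W,\bar W]$ being induced by $\chi_1\wedge\chi_2$. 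You also correctly isolate the crux, and correctly observe that it fails for general profinite groups of exponent $p$ and class $2$ (e.g.\ for a group of order $p^5$ of exponent $p$ whose commutator pairing is a nondegenerate alternating form on $(\dbZ/p)^4$, which has no $H_{p^3}$-quotient at all). Two remarks on the last step. First, the spanning statement you need is not a formal consequence of $H^*(G)$ being quadratic; what makes it true is the injectivity half of the Merkurjev--Suslin theorem \cite{MerkurjevSuslin82} together with the Steinberg presentation of $K_2^M(F)/p$: under the Kummer identification, $\ker\bigl(\cup\colon\wedge^2H^1(G)\to H^2(G)\bigr)$ is spanned by the symbols $(a)_F\wedge(1-a)_F$, each of which is a vanishing symbol. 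Phrased this way your final step closes, and outsourcing it to \cite{CheboluEfratMinac}, \cite{EfratMinac2} is no worse than what the paper itself does. Second, a small logical point: your ``equivalently'' between spanning the dual of $[\bar W,\bar W]$ and spanning $\ker(\cup)$ is really only one inclusion, namely $([\bar W,\bar W])^\vee\subseteq\ker(\cup)$ (obtained by factoring inflation $H^2(G^{[2]})\to H^2(G)$ through $H^2(G_{[3]})$), followed by the spanning of the larger space; the equality of the two spaces (equivalently, injectivity of $H^2(G_{[3]})_{\mathrm{dec}}\to H^2(G)$, the main theorem of \cite{CheboluEfratMinac}) then comes out as a consequence rather than being needed as an input. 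Modulo this bookkeeping, plus the routine care with transgression and completed exterior squares when $G^{[2]}$ has infinite rank, your proposal is a correct account of the proof the paper cites.
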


The analog of this fact for $p=2$ was proved by Villegas \cite{Villegas88} and
Min\'a\v c--Spira \cite[Cor.\ 2.18]{MinacSpira96} (see also \cite[Cor.\ 11.3 and Prop.\ 3.2]{EfratMinac1}):

\begin{thm}
\label{intersection for p=2}
Assume that $p=2$.
Then $G_{(3)}$ is the intersection of all normal open subgroups $N$ of $G$
such that $G/N$ is isomorphic to $\{1\}$, $\dbZ/2$, $\dbZ/4$, or $D_4$.
\end{thm}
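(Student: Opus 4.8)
\emph{The plan} is to prove the two inclusions separately. Write $M$ for the intersection in the statement and pass to the quotient $W=G_{[3]}=G/G_{(3)}$, which by construction is a pro-$2$ group of exponent $4$ in which the image $Z=G^{(2)}/G_{(3)}$ of $G^{(2)}$ is central; in particular $[W,W]\subseteq Z$, so $W$ has nilpotency class at most $2$.

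For the inclusion $G_{(3)}\subseteq M$, I would argue formally. The assignment $G\mapsto G_{(3)}=G^4[G^{(2)},G]$ is compatible with surjections, so for any open normal $N$ with $Q:=G/N\in\{1,\dbZ/2,\dbZ/4,D_4\}$ it is enough to check $Q_{(3)}=1$. This is a short direct computation in each case: for $\dbZ/2$ and $\dbZ/4$ one has $Q^4=1$ and $Q$ abelian, whence $[Q^{(2)},Q]=1$; for $D_4$ one has $Q^4=1$ and $Q^{(2)}=Q^2[Q,Q]=Z(D_4)$, so again $[Q^{(2)},Q]=1$. Hence $G_{(3)}\subseteq N$ for every such $N$, and therefore $G_{(3)}\subseteq M$.

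The substance is the reverse inclusion $M\subseteq G_{(3)}$, which amounts to showing that the quotients of $W$ lying in $\{1,\dbZ/2,\dbZ/4,D_4\}$ separate the identity from every $w\neq 1$ in $W$. Setting $V=G^{[2]}=G/G^{(2)}$, the extension $1\to Z\to W\to V\to 1$ is central with $V$ elementary abelian. If $w$ has nontrivial image in $V$, a linear functional on $V$ gives a surjection $W\to V\to\dbZ/2$ not annihilating $w$, so I may assume $w\in Z\setminus\{1\}$. Since $W$ has exponent $4$ and class $2$, the squaring map $s\colon V\to Z$ and the commutator map $b\colon V\times V\to Z$ are well defined and satisfy $s(xy)=s(x)s(y)b(x,y)$, so $Z$ is generated by the values of $s$ and $b$. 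The idea is to detect $w$ by the two remaining quotient types: a surjection $W\to\dbZ/4$ carries $Z$ into $(\dbZ/4)^{(2)}\isom\dbZ/2$ and is governed by $s$ (the Bockstein), while a surjection $W\to D_4$ carries $Z$ into $Z(D_4)=[D_4,D_4]\isom\dbZ/2$ and is governed by $b$ (the cup product). Thus it remains to show that no nonzero $w\in Z$ lies in the kernels of all the $\dbZ/2$-valued functionals on $Z$ arising from $\dbZ/4$- and $D_4$-quotients.

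This last point is where the hypothesis that $G=G_F$ is an \emph{absolute Galois group} is indispensable, and it is the main obstacle: for a general pro-$2$ group the theorem is false. For example $(Q_8)_{(3)}=1$, yet the only quotients of $Q_8$ in the list are copies of $\dbZ/2$, whose kernels meet in the nontrivial center $Z(Q_8)$. The way past this is to use the Kummer and inflation maps to match the functionals above with classes in $H^2(G,\dbZ/2)$: the $D_4$-functionals correspond to cup products $\chi_1\cup\chi_2$ and the $\dbZ/4$-functionals to the diagonal classes $\chi\cup\chi=\beta\chi$. Because $H^2(G_F,\dbZ/2)$ is spanned by such symbols (the degree-$2$ Bloch--Kato statement, i.e.\ Merkurjev's theorem), these functionals separate every nonzero $w\in Z$, completing the reverse inclusion. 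This also explains the shape of the list relative to the odd case of Theorem \ref{intersection for p odd}: for odd $p$ the map $x\mapsto x^p$ is trivial on $V$ (as $\binom{p}{2}\equiv 0\bmod p$), so only the commutator/Heisenberg quotients are needed, whereas for $p=2$ the nontrivial squaring map forces the cyclic quotient $\dbZ/4$ into the list in order to capture the Bockstein.
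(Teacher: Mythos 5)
First, a remark on the comparison itself: the paper does not prove Theorem \ref{intersection for p=2}; it quotes it from Villegas \cite{Villegas88} and Min\'a\v c--Spira \cite[Cor.\ 2.18]{MinacSpira96} (see also \cite{EfratMinac1}). So your proposal can only be judged against what a complete proof requires. Your first inclusion is correct, as is the reduction of the hard inclusion to detecting $1\neq w\in Z=G^{(2)}/G_{(3)}$ by $\dbZ/4$- and $D_4$-quotients, and your matching of these two quotient types with the classes $\chi\cup\chi=\beta\chi$ and $\chi_1\cup\chi_2$. You also correctly identify that arithmetic input is indispensable, with a valid counterexample ($Q_8$).

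The genuine gap is your final step: ``because $H^2(G_F,\dbZ/2)$ is spanned by such symbols, these functionals separate every nonzero $w\in Z$.'' This is a non sequitur, and your own example proves it cannot be repaired as stated: $H^2(Q_8,\dbF_p)$ with $p=2$ is \emph{also} spanned by products of degree-one classes (it is spanned by $x^2,xy,y^2$ subject to the single relation $x^2+xy+y^2=0$), yet the theorem fails for $Q_8$. So decomposability of $H^2$ is not the property that excludes $Q_8$-type behaviour. The statement one actually needs concerns \emph{relations}, not generation. Precisely: the five-term exact sequence of $1\to G^{(2)}\to G\to G^{[2]}\to 1$ (using $H^1(G^{[2]})\isom H^1(G)$) embeds $Z^\vee=\mathrm{Hom}(Z,\dbZ/2)$ via transgression into $H^2(G^{[2]})$, with image equal to $\ker\bigl(\mathrm{inf}\colon H^2(G^{[2]})\to H^2(G)\bigr)$, and the functionals on $Z$ induced by $\dbZ/4$- and $D_4$-quotients of $G$ are exactly those whose transgression is a cup product $\chi\cup\chi'$ of characters of $G^{[2]}$ lying in this kernel. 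Hence the hard inclusion is \emph{equivalent} to: $\ker(\mathrm{inf})$ is generated by the symbols it contains. For $Q_8$ this fails --- the kernel is spanned by $x^2+xy+y^2$, which is not a product of two linear forms over $\dbF_2$, so the kernel contains no nonzero symbol --- even though $H^2(Q_8)$ is decomposable.

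This is where Merkurjev's theorem must be invoked in its full strength, including the \emph{injectivity} of the Galois symbol $K_2^M(F)/2\to H^2(G_F,\dbZ/2)$, not merely its surjectivity. Injectivity says that the kernel of the cup-product surjection from the degree-two part of the polynomial algebra on $H^1(G)$ (which is $H^2(G^{[2]})$) onto $H^2(G)$ is spanned by the Steinberg elements $(a)_F\cup(1-a)_F$, and each of these \emph{is} a symbol lying in $\ker(\mathrm{inf})$. Granting that, every $\varphi\in Z^\vee$ is a finite sum of functionals arising from $\dbZ/4$- and $D_4$-quotients, and the separation you want follows. Supplying this identification of $\ker(\mathrm{inf})$ with the span of Steinberg symbols (or, alternatively, the quadratic-form/Witt-ring arguments of Min\'a\v c--Spira) is the missing, and decisive, part of your proof.
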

Moreover, $\dbZ/2$ can be omitted from this list unless $F$ is Euclidean \cite[Cor.\ 11.4]{EfratMinac1}.

Let $H^i(G)=H^i(G,\dbZ/p)$ be the $i$th profinite cohomology group with the trivial action of $G$ on $\dbZ/p$.
Thus $H^1(G)$ is the group of all continuous homomorphisms $G\to\dbZ/p$.
We write $\cup$ for the cup product $H^1(G)\times H^1(G)\to H^2(G)$.
For $a\in F^\times$ let $(a)_F\in H^1(G)$ correspond
to the coset $a(F^\times)^p$ under the Kummer isomorphism $F^\times/(F^\times)^p\xrightarrow{\sim} H^1(G)$.
One has $(a)_F\cup(a)_F=(a)_F\cup(-1)_F$ \cite[Prop.\ III.9.15(5)]{Berhuy10}.

Next for a finite group $K$, we call a Galois extension $E/F$ an \textbf{$K$-extension} if $\Gal(E/F)\isom K$.
We say that a $(\dbZ/p)\times(\dbZ/p)$-extension $F(\root p\of a,\root p\of b)/F$ embeds inside a $\bar G$-extension $E/F$
\textbf{properly} if either $p>2$ or else $p=2$ and $\Gal(E/F(\sqrt{ab}))\isom\dbZ/4$.

We refer to \cite[(6.1.8), (3.6.3), (3.6.2)]{Ledet05} for the following well known facts;
see also \cite{GrundmanSmithSwallow95} and \cite{GrundmanSmith96}.

\begin{lem}
\label{embedding problems}
Let $a,b\in F^\times$.
\begin{enumerate}
\item[(a)]
When $(a)_F,(b)_F$ are $\dbF_p$-linearly independent, $F(\root p\of a,\root p\of b)/F$
embeds inside a $\bar G$-extension properly if and only if $(a)_F\cup(b)_F=0$.
\item[(b)]
When $p=2$ and $(a)_F\neq0$, the extension $F(\sqrt a)/F$ embeds inside a $\dbZ/4$-extension
if and only if $(a)_F\cup(-1)_F=0$.
\end{enumerate}
\end{lem}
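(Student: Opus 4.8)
The plan is to treat all three cases as central Galois embedding problems and to apply the standard obstruction theory for a cyclic central kernel. In each case one has a central extension of finite groups
\[
1 \to \dbZ/p \to K \to \Gamma \to 1,
\]
with projection $\pi$, classified by a class $\epsilon_K\in H^2(\Gamma,\dbZ/p)$; here $(K,\Gamma)=(H_{p^3},(\dbZ/p)^2)$ for $p>2$, $(K,\Gamma)=(D_4,(\dbZ/2)^2)$ in part (a) for $p=2$, and $(K,\Gamma)=(\dbZ/4,\dbZ/2)$ in part (b). The prescribed extension corresponds to a surjection $\phi\colon G\twoheadrightarrow\Gamma$ under which a fixed basis of $H^1(\Gamma,\dbZ/p)$ pulls back to $(a)_F,(b)_F$ (resp.\ to $(a)_F$). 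The input I would use is that a \emph{weak} solution, i.e.\ a continuous homomorphism $\psi\colon G\to K$ with $\pi\circ\psi=\phi$, exists if and only if the inflation $\phi^*(\epsilon_K)\in H^2(G)$ vanishes. Thus the lemma reduces to computing $\phi^*(\epsilon_K)$ and to upgrading weak solutions to proper ones.

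For the computation I would read off $\epsilon_K$ from the commutator and power data of $K$. For $p>2$ the group $H_{p^3}$ has exponent $p$, so its generators lift to order-$p$ elements and $\epsilon_{H_{p^3}}$ carries no Bockstein component; the relation $[r,s]=t$ then identifies it with the cup product $x\cup y$ of the two coordinate characters $x,y$. Since $x\cup x=y\cup y=0$ for odd $p$, naturality gives $\phi^*(\epsilon_{H_{p^3}})=(a)_F\cup(b)_F$ directly. For $p=2$ and $K=D_4$ the squaring form $\bar g\mapsto\tilde g^2$ gives $\epsilon_{D_4}=x\cup x+x\cup y$, where $x$ is dual to the image of the order-$4$ generator and $y$ is dual to the reflection. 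The proper normalization $\Gal(E/F(\sqrt{ab}))\isom\dbZ/4$ forces the cyclic subgroup to lie over $F(\sqrt{ab})$, hence forces $\phi^*y=(ab)_F=(a)_F+(b)_F$ and $\phi^*x=(a)_F$; the square terms then cancel in characteristic $2$ and $\phi^*(\epsilon_{D_4})=(a)_F\cup(b)_F$. In part (b) the nontrivial extension $\dbZ/4\to\dbZ/2$ has class $x\cup x$, so $\phi^*(\epsilon)=(a)_F\cup(a)_F$, which equals $(a)_F\cup(-1)_F$ by the identity recalled just before the lemma.

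It remains to promote a weak solution $\psi$ to a proper one. Because the extension is central and $K$ is nonabelian, it is nonsplit, so $\Gamma$ has no complement in $K$; consequently any $\psi$ with $\pi\circ\psi$ surjective is itself surjective, as a proper image surjecting onto $\Gamma$ would be such a complement. When $(a)_F,(b)_F$ are $\dbF_p$-linearly independent $\phi$ is surjective, so $\psi$ cuts out a genuine $\bar G$-extension $E/F$, and for $p=2$ the chosen labelling guarantees $\Gal(E/F(\sqrt{ab}))\isom\dbZ/4$, i.e.\ the embedding is proper. The same argument settles part (b), since the only subgroup of $\dbZ/4$ surjecting onto $\dbZ/2$ is $\dbZ/4$ itself.

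I expect the main obstacle to be the honest identification of the class $\epsilon_K$ together with the bookkeeping that ties the proper condition for $p=2$ to the correct labelling of the coordinate characters -- precisely the content of the results cited from \cite{Ledet05}. Once that labelling is fixed, the cancellation of the square terms turning $\phi^*(\epsilon_{D_4})$ into the clean product $(a)_F\cup(b)_F$ is the crux; the odd-$p$ case is easier because all squares in $H^2$ vanish.
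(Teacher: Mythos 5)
Your proposal is correct, but note that the paper itself gives no proof of this lemma: it is quoted as a collection of well-known facts, with references to \cite[(6.1.8), (3.6.3), (3.6.2)]{Ledet05}, \cite{GrundmanSmithSwallow95} and \cite{GrundmanSmith96}. What you have written is essentially a self-contained reconstruction of what those sources prove. Your route --- obstruction theory for the central extensions $1\to\dbZ/p\to \bar G\to(\dbZ/p)^2\to 1$ and $1\to\dbZ/2\to\dbZ/4\to\dbZ/2\to1$, identification of the extension class $\epsilon_K$ from the commutator and $p$-th power structure, and the upgrade of weak solutions to surjective ones via nonsplitness --- is the standard argument underlying the cited results; Ledet packages the obstruction as a decomposable Brauer class (a quaternion or symbol algebra) rather than as $\phi^*(\epsilon_K)\in H^2(G)$, but under the identification of symbols with cup products this is the same computation, and your version has the advantage of treating all three cases uniformly. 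Two spots deserve one more line of justification. First, in the $D_4$ case the properness condition only pins down $\phi^*y=(ab)_F$; a priori $\phi^*x$ could be $(a)_F$ or $(b)_F$. This ambiguity is harmless because $x\cup x+x\cup y$ is invariant under the substitution $x\mapsto x+y$ (using commutativity of the cup product with $\dbF_2$-coefficients), so either labelling yields the obstruction $(a)_F\cup(b)_F$; this is what legitimizes your ``chosen labelling''. Second, your surjectivity argument is correct but should be spelled out: a proper subgroup $H\leq K$ with $\pi(H)=\Gamma$ has order $|\Gamma|$ by Lagrange, hence meets the central kernel trivially and is a complement, so $K\isom\dbZ/p\times\Gamma$ would be abelian --- contradicting $K\isom\bar G$ (and for $K=\dbZ/4$ contradicting $\dbZ/4\not\isom\dbZ/2\times\dbZ/2$). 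With these details filled in, your argument is a complete proof of the lemma that the paper leaves to the literature.
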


\section{Rigidity}
\label{section on rigidity}
The following key notion is a special case of \cite[Def.\ 23.3.1]{EfratBook} and originates from
\cite{Szymiczek77} and \cite{Ware81}.
Note however that our definition differs by sign from that of \cite{Ware81}.

\begin{defin} \rm
An element $a$ of $F^\times$ is called \textbf{$p$-rigid} if $(a)_F\neq0$ and there is no $b\in F^\times$ such that
$(a)_F\cup(b)_F=0$ in $H^2(G)$ and $(-a)_F,(b)_F$ are $\dbF_p$-linearly independent.
\end{defin}

To get an alternative description of $p$-rigid elements, we define subsets $C,D$ of $F^\times$ as follow.

When $(-1)_F=0$ (resp., $(-1)_F\neq0$) let $C$ be the set of all $a\in F^\times$ for which there exists $b\in F^\times$
such that $(a)_F\cup(b)_F=0$ and $(a)_F,(b)_F$ (resp., $(a)_F,(b)_F,(-1)_F$) are $\dbF_p$-linearly independent in $H^1(G)$.

When $(-1)_F\neq0$ (so $p=2$) we  set
\[
D=\{a\in F^\times\ |\ (a)_F\cup(-1)_F=0\}.
\]
It is the subgroup of $F^\times$.

\begin{lem}
\label{rigidity and Galois groups}
Let $a\in F^\times$ such that $(a)_F\neq0,(-1)_F$.
The following conditions are equivalent:
\begin{enumerate}
\item[(a)]
$a$ is not $p$-rigid;
\item[(b)]
either $a\in C$ or both $(-1)_F\neq0$ and $a\in D$;
\end{enumerate}
\end{lem}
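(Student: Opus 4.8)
The plan is to translate everything into the linear algebra of the cup product on $H^1(G)$ and to split according to whether $(-1)_F=0$. Throughout I write $\alpha=(a)_F$ and $\epsilon=(-1)_F$, and I use freely that the Kummer isomorphism is additive, so that $(-a)_F=\alpha+\epsilon$, together with the identity $\alpha\cup\alpha=\alpha\cup\epsilon$ recorded above. Since $\alpha\neq0,\epsilon$ by hypothesis, the element $(-a)_F=\alpha+\epsilon$ is nonzero, and $\{\alpha,\epsilon\}$ is independent whenever $\epsilon\neq0$. I also recall that $(-1)_F\neq0$ forces $p=2$, so the case $\epsilon\neq0$ is automatically a statement about $\dbF_2$-vector spaces. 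It is convenient to set $V=\{(b)_F\in H^1(G)\ |\ \alpha\cup(b)_F=0\}$, an $\dbF_p$-subspace of $H^1(G)$; since the Kummer map is onto, quantifying over $b\in F^\times$ with $\alpha\cup(b)_F=0$ is the same as quantifying over $\beta\in V$. With this notation, $a$ fails to be $p$-rigid exactly when $V$ contains some $\beta$ with $(-a)_F,\beta$ linearly independent.

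First I would dispose of the case $\epsilon=0$. Here $(-a)_F=\alpha$, so the condition defining non-rigidity --- existence of $\beta\in V$ with $\alpha,\beta$ linearly independent --- is word for word the condition $a\in C$, while the clause involving $D$ is vacuous because $(-1)_F=0$. Hence (a) and (b) coincide in this case with no further work.

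The substance is the case $\epsilon\neq0$, where $p=2$ and all independence is over $\dbF_2$; here two vectors are independent iff both are nonzero and distinct. The first key observation is that $(-a)_F=\alpha+\epsilon$ always lies in $V$: indeed, by bilinearity and $\alpha\cup\alpha=\alpha\cup\epsilon$ one gets $\alpha\cup(\alpha+\epsilon)=2(\alpha\cup\epsilon)=0$. Consequently $\{0,\alpha+\epsilon\}\subseteq V$, and over $\dbF_2$ non-rigidity becomes the clean statement $V\supsetneq\{0,\alpha+\epsilon\}$. Next I would record the reformulations $a\in D\iff\epsilon\in V$ and, using that $\{\alpha,\epsilon\}$ is independent, $a\in C\iff V\not\subseteq\mathrm{span}\{\alpha,\epsilon\}$. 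Finally I would note the self-dual fact $\alpha\in V\iff\epsilon\in V$, which again follows from $\alpha\cup\alpha=\alpha\cup\epsilon$.

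With these reformulations the equivalence is a short deduction. For (b)$\Rightarrow$(a): if $\epsilon\in V$ then $\epsilon\notin\{0,\alpha+\epsilon\}$ witnesses $V\supsetneq\{0,\alpha+\epsilon\}$; and if $V\not\subseteq\mathrm{span}\{\alpha,\epsilon\}$, any $\beta\in V$ outside the span lies outside $\{0,\alpha+\epsilon\}$, again giving non-rigidity. For the converse (a)$\Rightarrow$(b), suppose $a\notin D$, so $\epsilon\notin V$ and hence $\alpha\notin V$; then $V\cap\mathrm{span}\{\alpha,\epsilon\}=\{0,\alpha+\epsilon\}$, so any witness $\beta\in V\setminus\{0,\alpha+\epsilon\}$ to non-rigidity must lie outside $\mathrm{span}\{\alpha,\epsilon\}$, which is exactly $a\in C$. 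The main obstacle, and the only place needing care, is precisely this last step: one must promote the weaker two-term independence defining non-rigidity (involving $(-a)_F$) to the stronger three-term independence defining $C$, and the pivot is the computation that $V$ meets $\mathrm{span}\{\alpha,\epsilon\}$ in exactly $\{0,\alpha+\epsilon\}$ as soon as $a\notin D$.
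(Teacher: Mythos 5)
Your proof is correct and follows essentially the same route as the paper's: the case $(-1)_F=0$ is immediate, and when $(-1)_F\neq0$ the argument reduces to $\dbF_2$-linear algebra via the identity $(a)_F\cup(a)_F=(a)_F\cup(-1)_F$, splitting according to whether $(a)_F\cup(-1)_F=0$ (your $a\in D$, with witness $\epsilon=(-1)_F$, matching the paper's choice $b=-1$) or not (where your computation $V\cap\mathrm{span}\{\alpha,\epsilon\}=\{0,\alpha+\epsilon\}$ is exactly the paper's observation that any $b$ with $(a)_F\cup(b)_F=0$ has $(b)_F\neq(a)_F,(-1)_F$, so two-term and three-term independence coincide). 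Your packaging in terms of the subspace $V=\alpha^{\perp}$ is a cosmetic reorganization of the same case analysis, not a different argument.
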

\begin{proof}
When $(-1)_F=0$ this is immediate.

Next assume that  $(a)_F\cup(-1)_F\neq0$.
Then $(-1)_F\neq0$, $p=2$ and $(a)_F\cup(a)_F\neq0$.
Thus, if $b\in F^\times$ satisfies $(a)_F\cup(b)_F=0$, then $(b)_F\neq (-1)_F,(a)_F$.
Therefore $(-a)_F,(b)_F$ are $\dbF_2$-linearly independent if and only if
$(a)_F,(b)_F,(-1)_F$ are $\dbF_2$-linearly independent.
Conclude that in this case $a$ is not $2$-rigid if and only if $a\in C$.

Finally, assume that $(-1)_F\neq0$ but $(a)_F\cup(-1)_F=0$ (i.e., $a\in D$).
Then $p=2$ and, by the assumptions, $(-a)_F,(-1)_F$ are $\dbF_2$-linearly independent.
Hence $a$ is not $2$-rigid.
\end{proof}

Next let $N_p$ be the subgroup of $F^\times$ generated by all elements which are not $p$-rigid and by $-1$.

We will need the following result of Berman and Cordes which simplifies the definition in the case $p=2$
(see \cite[Example 2.5(i)]{Ware81}, \cite[Ch.\ 5, Th.\ 5.18]{Marshall80},
and the related result \cite[Th.\ 1]{BermanCordesWare80}):

\begin{prop}
\label{Berman}
Let $p=2$.
Then $N_2$ is the set of all $a\in F^\times$ such that $a$ or $-a$ is not $2$-rigid.
\end{prop}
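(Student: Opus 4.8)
The plan is to prove the asserted equality by two inclusions, the second of which I reduce to a closure property. Write
\[
S=\{a\in F^\times\mid a\text{ or }-a\text{ is not }2\text{-rigid}\}.
\]
The routine inclusion is $S\subseteq N_2$: if $a$ is not $2$-rigid then $a\in N_2$ by definition of $N_2$, while if $-a$ is not $2$-rigid then $-a\in N_2$ and hence $a=(-1)(-a)\in N_2$, since $-1\in N_2$. For the reverse inclusion I observe that every generator of $N_2$ already lies in $S$: a non-$2$-rigid element lies in $S$ by taking the first option, and $-1\in S$ because $1=-(-1)$ is a square and a square is never $2$-rigid (its class in $H^1(G)$ is $0$). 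Consequently, once $S$ is shown to be a subgroup of $F^\times$, we get $N_2\subseteq S$, and the proposition follows. So everything reduces to proving that $S$ is a subgroup.

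Membership of $a$ in $S$ depends only on $(a)_F$, hence only on the square class of $a$, so $S$ is a union of cosets of $(F^\times)^2$; via the Kummer isomorphism $F^\times/(F^\times)^2\xrightarrow{\sim}H^1(G)$ it corresponds to a subset $\Sigma\subseteq H^1(G)$, and since every element of $F^\times/(F^\times)^2$ is its own inverse, $S$ is a subgroup exactly when $\Sigma$ is an $\dbF_2$-subspace. I would first reformulate rigidity cohomologically: writing $e=(-1)_F$ and $A(a)=\{\beta\in H^1(G)\mid (a)_F\cup\beta=0\}$, the relation $(a)_F\cup(a)_F=(a)_F\cup(-1)_F$ gives $(a)_F+e\in A(a)$ always, and one checks from the definition that $a$ is $2$-rigid if and only if $(a)_F\neq0$ and $A(a)=\{0,(a)_F+e\}$. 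Thus $a\notin S$ precisely when $(a)_F\notin\{0,e\}$, $A(a)=\{0,(a)_F+e\}$, and $A(-a)=\{0,(a)_F\}$. In particular $0,e\in\Sigma$, and since $S$ is visibly symmetric under $a\leftrightarrow-a$, the set $\Sigma$ is invariant under translation by $e$. These two facts dispose of the closure $\alpha+\gamma\in\Sigma$ (for $\alpha=(a)_F,\gamma=(c)_F\in\Sigma$) whenever one of $\alpha,\gamma,\alpha+\gamma$ lies in $\{0,e\}$, leaving only the generic case in which all three avoid $\{0,e\}$.

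For the generic case I would argue by contradiction: assuming that both $ac$ and $-ac$ are $2$-rigid, while neither $\{a,-a\}$ nor $\{c,-c\}$ consists of two $2$-rigid elements, I would feed the non-$2$-rigidity witnesses for $a$ and for $c$ (each non-rigid element lies in $C$, or in $D$ when $e\neq0$, by Lemma~\ref{rigidity and Galois groups}) into the bilinear relation $(\alpha+\gamma)\cup\beta=\alpha\cup\beta+\gamma\cup\beta$ to produce a $\beta$ violating one of the minimal-annihilator identities $A(ac)=\{0,(ac)_F+e\}$ or $A(-ac)=\{0,(ac)_F\}$. Here I expect to split according to whether $e=0$ or $e\neq0$: when $e\neq0$ I would exploit that $D=\{a\mid (a)_F\cup e=0\}$ is a subgroup contained in $S$ (by Lemma~\ref{embedding problems}(b) together with the argument already used in the proof of Lemma~\ref{rigidity and Galois groups}), so that the only genuine content is the multiplicative interaction of $C$-elements modulo $D$.

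This generic closure step is the main obstacle, and it is exactly the structural fact about products of $2$-rigid elements isolated in the works of Ware, Marshall, and Berman--Cordes--Ware (\cite{Ware81}, \cite{Marshall80}, \cite{BermanCordesWare80}). I would either import their computation of the elements represented by the binary form $\langle1,a\rangle$ for a rigid $a$, or reprove the needed statement directly in the present cohomological language using the bilinearity of the cup product as above; everything else in the argument is bookkeeping with square classes and the elementary reductions of the previous paragraph.
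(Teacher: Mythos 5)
The first thing to note is that the paper does not prove Proposition \ref{Berman} at all: it is imported as a known result of Berman and Cordes, with references to \cite[Example 2.5(i)]{Ware81}, \cite[Ch.\ 5, Th.\ 5.18]{Marshall80} and \cite[Th.\ 1]{BermanCordesWare80}. So the paper's ``own proof'' is a citation, and your attempt must be measured against that. Your elementary scaffolding is correct and worthwhile: $S\subseteq N_2$ is immediate; every generator of $N_2$ lies in $S$ (including $-1$, since $1$ has trivial Kummer class); membership in $S$ depends only on square classes; hence the proposition is equivalent to $S$ being a subgroup of $F^\times$, which reduces to the generic closure case where $(a)_F,(c)_F,(ac)_F$ all avoid $\{0,e\}$, with $e=(-1)_F$. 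One small slip: your biconditional ``$a$ is $2$-rigid iff $(a)_F\neq0$ and $A(a)=\{0,(a)_F+e\}$'' fails when $(a)_F=e\neq0$ (then $a$ is vacuously $2$-rigid, while $A(a)=A(-1)$ can be large); but the characterization of $a\notin S$ you actually use is restricted to $(a)_F\notin\{0,e\}$, where it is correct.

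The issue is the generic closure step, which you explicitly leave open. This step is not a formal consequence of bilinearity of the cup product: bilinearity shows that each annihilator $A(x)$ is a subgroup of $H^1(G)$, but it yields no relation among $A(a)$, $A(c)$ and $A(ac)$ --- the non-rigidity witnesses $\beta_1\in A(a)$ and $\beta_2\in A(c)$ need not combine into anything annihilating $(ac)_F$ or $(-ac)_F$. That closure property (closure of the ``basic part'' under products) is exactly the content of the cited theorems, and their known proofs use genuine quadratic-form-theoretic input (value sets of binary forms, i.e.\ norm groups of quadratic extensions, and their multiplicative interplay), not cup-product formalism alone. Consequently: if you resolve the step by invoking \cite{Ware81}, \cite{Marshall80} or \cite{BermanCordesWare80}, your argument is correct and is in substance the same as the paper's (a citation, surrounded by harmless extra reductions); if you insist on the ``reprove it via bilinearity'' alternative, there is a genuine gap at precisely the point that carries all the content of the proposition.
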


\begin{cor}
\label{three possibilities}
One of the following holds:
\begin{enumerate}
\item[(1)]
$N_p=\langle (F^\times)^p,C,-1\rangle$;
\item[(2)]
$p=2$, $(-1)_F\neq0$ and  $N_2=\langle D,-1\rangle$.
\end{enumerate}
\end{cor}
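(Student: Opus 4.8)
The plan is to combine the description of non-$p$-rigid elements in Lemma~\ref{rigidity and Galois groups} with the Berman--Cordes simplification (Proposition~\ref{Berman}), and then to reduce the statement to a redundancy dichotomy between the two generating sets $C$ and $D$. Throughout I would work in the $\dbF_p$-vector space $V=F^\times/(F^\times)^p$, identified with $H^1(G)$ via the Kummer isomorphism; I write $\epsilon=(-1)_F$ and, for $a\in F^\times$, $(a)_F^\perp=\{(b)_F : (a)_F\cup(b)_F=0\}\subseteq H^1(G)$, and I use the identity $(a)_F\cup(a)_F=(a)_F\cup(-1)_F$ repeatedly. First I would record that both candidate subgroups always lie in $N_p$: squares and elements of $C$ are non-$p$-rigid, $-1\in N_p$ by definition, and for $a\in D$ with $(a)_F\neq0,\epsilon$ Lemma~\ref{rigidity and Galois groups} gives that $a$ is non-$p$-rigid, while the remaining elements of $D$ lie in $\langle(F^\times)^p,-1\rangle$.

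If $\epsilon=0$ the result is immediate: the disjunct involving $D$ in Lemma~\ref{rigidity and Galois groups} is vacuous, so an element $a$ with $(a)_F\neq0$ is non-$p$-rigid exactly when $a\in C$; since every $p$-th power is non-$p$-rigid as well, the non-$p$-rigid elements are precisely $(F^\times)^p\cup C$, whence $N_p=\langle(F^\times)^p,C,-1\rangle$, which is alternative~(1).

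The substantial case is $\epsilon\neq0$, which forces $p=2$. Here I would first invoke Proposition~\ref{Berman} to write $N_2$ as the group generated by all $a$ with $a$ or $-a$ not $2$-rigid, and then feed each such generator into Lemma~\ref{rigidity and Galois groups}. Splitting according to whether $(a)_F$ equals $0$, $\epsilon$, or neither, and using $(-a)_F=(a)_F+\epsilon$, one identifies the non-$2$-rigid generators as lying in $C\cup D$ up to the involution $x\mapsto x+\epsilon$; consequently $N_2=\langle(F^\times)^2,C,D,-1\rangle$. The corollary is therefore equivalent to the assertion that one of the two blocks is redundant, i.e.
\[
C\subseteq\langle D,-1\rangle\qquad\text{or}\qquad D\subseteq\langle(F^\times)^2,C,-1\rangle,
\]
the first inclusion yielding alternative~(2) and the second yielding alternative~(1).

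To establish this dichotomy I would argue by contraposition: assuming $C\not\subseteq\langle D,-1\rangle$, fix $c\in C$ with $c\notin\langle D,-1\rangle$ and prove $D\subseteq\langle(F^\times)^2,C,-1\rangle$. Since $\langle D,-1\rangle=D\cup(-1)D$, the condition $c\notin\langle D,-1\rangle$ translates via the identity above into $(c)_F\cup\epsilon\notin\{0,\epsilon\cup\epsilon\}$; in particular $(c)_F$ is anisotropic for the form $x\mapsto x\cup x$, and the witness $b$ for $c\in C$ supplies an element of $(c)_F^\perp$ lying outside $\langle(c)_F,\epsilon\rangle$. Given $d\in D$ with $(d)_F\neq0,\epsilon$, I would use the structural fact obtained by unwinding the definition of $C$: if $d\notin C$ then $(d)_F^\perp=\langle(d)_F,\epsilon\rangle$ is exactly two-dimensional (the inclusion $\langle(d)_F,\epsilon\rangle\subseteq(d)_F^\perp$ holds because $d\in D$ is isotropic, and the reverse inclusion is the failure of the witness condition). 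Comparing this tight perpendicular space with the anisotropic class $(c)_F$ and its witness, the hard part will be to produce, out of $c$, $b$, and $d$, an explicit element of $C$ in the coset $(d)_F+\langle\epsilon\rangle$ — equivalently, to show that one of $d$, $dc$, $-dc$ again satisfies the defining condition for $C$. This cup-product bookkeeping, which amounts to showing that cupping with the anisotropic $(c)_F$ is nondegenerate enough to drag every isotropic $(d)_F$ back into the $\dbF_p$-span of the classes from $C$ together with $\epsilon$, is the main obstacle; once it is settled, the two displayed inclusions deliver alternatives~(1) and~(2) respectively.
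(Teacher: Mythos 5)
There is a genuine gap, and it sits exactly where you acknowledge it: the ``redundancy dichotomy'' $C\subseteq\langle D,-1\rangle$ or $D\subseteq\langle(F^\times)^2,C,-1\rangle$ is not a technical detail to be settled by cup-product bookkeeping --- it \emph{is} the content of the corollary, and your proposal leaves it unproven. Your reduction up to that point is correct but also lossy: the identity $N_2=\langle(F^\times)^2,C,D,-1\rangle$ follows from Lemma \ref{rigidity and Galois groups} and the definition of $N_2$ alone, so the way you invoke Proposition \ref{Berman} extracts nothing from it. The point you are missing is that Proposition \ref{Berman} is a statement about the underlying \emph{set} of $N_2$: every element $a\in N_2$ has the property that $a$ or $-a$ is not $2$-rigid. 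Feeding this into Lemma \ref{rigidity and Galois groups} places $N_2$ inside the set-theoretic \emph{union} $\langle(F^\times)^2,C,-1\rangle\cup\langle D,-1\rangle$, and since both of these are subgroups contained in $N_2$, one gets $N_2=\langle(F^\times)^2,C,-1\rangle\cup\langle D,-1\rangle$ as an equality of sets. The dichotomy then follows from the elementary group-theoretic fact that a group cannot be the union of two proper subgroups. That single observation replaces your entire contrapositive argument.

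As for the route you sketch instead: fixing $c\in C\setminus\langle D,-1\rangle$ and trying to drag every $d\in D$ into $\langle(F^\times)^2,C,-1\rangle$ by exhibiting an element of $C$ in the coset $(d)_F+\langle\epsilon\rangle$ would, in effect, require you to reprove a Berman--Cordes-type result by hand inside $H^1(G)$; nothing in the linear-algebra data you list (the witness $b$ for $c$, the equality $(d)_F^\perp=\langle(d)_F,\epsilon\rangle$ when $d\notin C$) forces such an element to exist, because the obstruction lives in $H^2(G)$ and is not controlled by dimension counts in $H^1(G)$. So the plan is not merely incomplete --- it is pointed at a statement whose only available proof in this paper's framework goes through the union argument you bypassed.
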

\begin{proof}
If $(-1)_F=0$, then (1) holds by Lemma \ref{rigidity and Galois groups}.

Next suppose that $(-1)_F\neq0$ (so $p=2$).
By Lemma \ref{rigidity and Galois groups}, $a\in F^\times\setminus((F^\times)^2\cup-(F^\times)^2)$ is not $2$-rigid
if and only if it is in $C\cup D$.
Hence the subgroups $\langle (F^\times)^2,C,-1\rangle$  and $\langle D,-1\rangle$ of $F^\times$ are contained in $N_2$.
Conversely, by Proposition \ref{Berman}, $N_2$ is contained in the union of these two subgroups.
Thus
\[
N_2=\langle (F^\times)^2,C,-1\rangle\cup \langle D,-1\rangle.
\]
Since a group cannot be the union of two proper subgroups, (1) or (2) must hold.
\end{proof}

\begin{rem}
\rm
Let $K^M_*(F)$ be the Milnor $K$-ring of $F$ (\cite{Milnor70}, \cite[\S24]{EfratBook}).
The Kummer isomorphism $F^\times/(F^\times)^p\xrightarrow{\sim} H^1(G_F)$, $a(F^\times)_F\mapsto (a)_F$,
induces the Galois symbol homomorphism
$K^M_*(F)/pK^M_*(F)\to H^*(G_F)$.
By the Merkurjev--Suslin theorem (\cite{MerkurjevSuslin82}, \cite[Ch.\ 8]{GilleSzamuely}), it is an isomorphism in degree $2$
(in fact, by the more recent results of Rost and Voevodsky \cite{Voevodsky11},
it is an isomorphism in all degrees, but we shall not need this very deep fact).
Therefore, our notion of a $p$-rigid element $a$ coincides with the notion of $(F^\times)^p$-rigidity of $a(F^\times)^p$
in $K^M_*(F)/pK^M_*(F)$ given in \cite[Def.\ 23.3.1]{EfratBook}.
Consequently $N_p$ coincides with the subgroup $N_{(F^\times)^p}$, defined $K$-theoretically
in \cite[Def.\ 26.4.5]{EfratBook}.
\end{rem}

\section{The Kummer pairing}
Let $\mu_p$ be the group of $p$th roots of unity in $F$ and recall that $G=G_F$ is the absolute Galois group of $F$.
Consider the Kummer pairing
\[
(\cdot,\cdot)\colon\quad G\times F^\times\to\mu_p, \quad
(\sig,a)\mapsto \sig(\root p\of a)/\root p\of a.
\]
Its left kernel is $G^{(2)}$ and its right kernel is $(F^\times)^p$.
We compute the annihilator of $N_p$ under this pairing.

Let $T=\bigcap_\rho\rho\inv(2\dbZ/4\dbZ)$, where $\rho$ ranges over all epimorphisms $\rho\colon G\to\dbZ/4$,
and $2\dbZ/4\dbZ$ is the subgroup of $\dbZ/4$ of order $2$.

\begin{lem}
\label{annihilator of D}
Assume that $(-1)_F\neq0$.
The annihilator of $D$ with respect to the Kummer pairing is $T$.
\end{lem}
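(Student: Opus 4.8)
The plan is to exploit the fact that $(-1)_F\neq0$ forces $p=2$, so that the Kummer pairing induces a perfect pairing between $G/G^{(2)}$ and $F^\times/(F^\times)^2$. Under this pairing the annihilator of a single class $a(F^\times)^2$ with $(a)_F\neq0$ is exactly the index-$2$ subgroup $H_a=\Gal(F_\sep/F(\sqrt a))$ fixing $\sqrt a$, since $(\sig,a)=\sig(\sqrt a)/\sqrt a$ is trivial precisely when $\sig$ fixes $\sqrt a$. Because the pairing is bilinear in the second variable, the annihilator of the subgroup $D$ is the intersection of the annihilators of its elements; the square classes lie in the right kernel and contribute all of $G$, so
\[
D^\perp=\bigcap_{a\in D,\ (a)_F\neq0}H_a.
\]
The task thus reduces to identifying this family of index-$2$ subgroups with the family $\{\rho\inv(2\dbZ/4\dbZ)\}$ defining $T$.

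First I would translate membership in $D$ into a statement about $\dbZ/4$-extensions via Lemma \ref{embedding problems}(b): for $(a)_F\neq0$, one has $a\in D$ exactly when $F(\sqrt a)/F$ embeds into some $\dbZ/4$-extension $E/F$. Such an $E$ is the fixed field of the kernel of an epimorphism $\rho\colon G\to\dbZ/4$, and the unique quadratic subextension of $E/F$ corresponds to $\rho\inv(2\dbZ/4\dbZ)$. I would then check that this quadratic subextension is precisely $F(\sqrt a)$, so that $H_a=\rho\inv(2\dbZ/4\dbZ)$.

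Next I would verify the correspondence in both directions. Given an epimorphism $\rho\colon G\to\dbZ/4$, the subgroup $\rho\inv(2\dbZ/4\dbZ)$ has index $2$ and hence equals $H_a$ for the element $a$ cutting out its fixed field; since $F(\sqrt a)$ lies inside the $\dbZ/4$-extension determined by $\rho$, Lemma \ref{embedding problems}(b) gives $a\in D$. Conversely every $a\in D$ with $(a)_F\neq0$ produces, via the same lemma, an epimorphism $\rho$ with $\rho\inv(2\dbZ/4\dbZ)=H_a$. Therefore
\[
\{H_a\ |\ a\in D,\ (a)_F\neq0\}=\{\rho\inv(2\dbZ/4\dbZ)\ |\ \rho\colon G\twoheadrightarrow\dbZ/4\},
\]
and intersecting both collections yields $D^\perp=T$. (One should also note, for completeness, that both $T$ and $D^\perp$ contain $G^{(2)}$, since $\rho(G^{(2)})=\rho(G^2[G,G])=2\dbZ/4\dbZ$ for every such $\rho$.)

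The main obstacle is the bookkeeping in this two-way correspondence rather than any deep input: I must ensure that the element $a$ attached to a given $\rho$ genuinely generates the quadratic layer $\rho\inv(2\dbZ/4\dbZ)$ and lands in $D$ (not merely that \emph{some} element does), and symmetrically that the $\rho$ produced from a given $a\in D$ really has $F(\sqrt a)$ as its quadratic subextension. Handling the square classes and confirming that repetitions or the multiplicity of $\dbZ/4$-extensions over a fixed $F(\sqrt a)$ do not affect the intersection are routine but should be addressed.
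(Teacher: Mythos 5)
Your proof is correct and takes essentially the same route as the paper's: both translate membership of $a$ in $D$ (for $(a)_F\neq0$) into the embeddability of $F(\sqrt a)/F$ into a $\dbZ/4$-extension via Lemma \ref{embedding problems}(b), and then match the subgroups $H_a=\Gal(F_\sep/F(\sqrt a))$ with the subgroups $\rho\inv(2\dbZ/4\dbZ)$ using the uniqueness of the quadratic subextension of a $\dbZ/4$-extension. The paper's proof is merely a terser statement of the same two-way correspondence you spell out.
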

\begin{proof}
Let $\sig\in G$.
Then $\sig\in T$ if and only if $\sig$ fixes $\sqrt a$ for every $a\in F^\times\setminus(F^\times)^2$
such that $F(\sqrt a)/F$
embeds inside a $\dbZ/4$-extension of $F$.
By Lemma \ref{embedding problems}(b), this means that $(\sig,a)=1$ whenever $(a)_F\cup(-1)_F=0$,
i.e., whenever $a\in D$.
\end{proof}

We define a subgroup $\widetilde G$ of $G$ by $\widetilde G=G$ if $p>2$, and
$\widetilde G=G_{F(\sqrt{-1})}$ when $p=2$.
Thus $\widetilde G=G$ when $(-1)_F=0$.
Also let $\bar G$ be as in (\ref{def of G bar}).

\begin{prop}
\label{annihilator of C}
The following conditions on $\sig\in \widetilde G$ are equivalent:
\begin{enumerate}
\item[(a)]
for every $\tau\in\widetilde G$ the commutator $[\sig,\tau]$ is in $G_{(3)}$;
\item[(b)]
for every $\tau\in\widetilde G$ and every $\bar G$-extension $L$ of $F$, the restrictions $\sig|_L,\tau|_L$ commute;
\item[(c)]
$(\sig,a)=1$ for every $a\in C$.
\end{enumerate}
\end{prop}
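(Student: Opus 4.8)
The plan is to establish the two biconditionals (a)$\Leftrightarrow$(b) and (b)$\Leftrightarrow$(c) separately, using the reformulation that, for a $\bar G$-extension $L/F$ with $N_L=\ker(G\to\Gal(L/F))$, the restrictions $\sig|_L,\tau|_L$ commute in $\Gal(L/F)\isom\bar G$ if and only if $[\sig,\tau]\in N_L$. The first biconditional will rest on the intersection theorems, and the second on the embedding criterion of Lemma \ref{embedding problems} together with the structural facts about $\bar G$ recorded after (\ref{def of G bar}).

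For (a)$\Leftrightarrow$(b) I would invoke Theorem \ref{intersection for p odd} when $p>2$ and Theorem \ref{intersection for p=2} when $p=2$ to write $G_{(3)}$ as the intersection of all open normal $N$ with $G/N$ in the relevant list. The key point is that every group on these lists other than $\bar G$ itself ($\{1\}$, $\dbZ/p$, and for $p=2$ also $\dbZ/4$) is abelian, so $[\sig,\tau]$ automatically lies in the corresponding $N$. Hence $[\sig,\tau]\in G_{(3)}$ reduces to $[\sig,\tau]\in N_L$ for every $\bar G$-extension $L/F$, which by the reformulation is exactly the commuting condition of (b); since both statements quantify over the same $\tau\in\widetilde G$ with $\sig$ fixed, the equivalence is immediate.

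For (b)$\Leftrightarrow$(c) I would use that $Z(\bar G)$ is the unique quotient of $\bar G$ isomorphic to $(\dbZ/p)^2$ and that every proper subgroup of $\bar G$ is abelian. Given a $\bar G$-extension $L/F$, the fixed field $L^{Z(\bar G)}$ is a $(\dbZ/p)^2$-extension, hence equals $F(\root p\of a,\root p\of b)$ for some $a,b$ with $(a)_F,(b)_F$ linearly independent, and the proper embedding forces $(a)_F\cup(b)_F=0$ by Lemma \ref{embedding problems}(a). Since $Z(\bar G)=\Gal(L/L^{Z(\bar G)})$, an element $\sig|_L$ is central exactly when $\sig$ fixes $\root p\of a$ and $\root p\of b$, i.e.\ when $(\sig,a)=(\sig,b)=1$. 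Conversely each $a\in C$, with its witness $b$, yields via Lemma \ref{embedding problems}(a) a $\bar G$-extension $L\supseteq F(\root p\of a,\root p\of b)$. When $p>2$, or when $p=2$ and $(-1)_F=0$, one has $\widetilde G=G$, so $\widetilde G\to\bar G$ is onto and (b) says precisely that $\sig|_L$ is central for every such $L$; matching the two descriptions identifies this with (c).

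The hard part will be the case $p=2$ with $(-1)_F\neq0$, where $\widetilde G=G_{F(\sqrt{-1})}$ has index $2$ and the definition of $C$ additionally requires $(a)_F,(b)_F,(-1)_F$ to be independent. Here I would split the $D_4$-extensions $L$ according to whether $\sqrt{-1}\in L$. If $\sqrt{-1}\in L$, then $\widetilde G$ maps into $\Gal(L/F(\sqrt{-1}))$, a proper and hence abelian subgroup of $D_4$, so all restrictions of elements of $\widetilde G$ commute and $L$ imposes no condition in (b). If $\sqrt{-1}\notin L$, then $L\cap F(\sqrt{-1})=F$, the map $\widetilde G\to\Gal(L/F)$ is surjective, and commuting of $\sig|_L$ with all $\tau|_L$ again forces $\sig|_L\in Z(D_4)$. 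It remains to check that the $D_4$-extension attached to an $a\in C$ falls in the second type: were $\sqrt{-1}\in L$, then $L$ would contain the multiquadratic field $F(\sqrt a,\sqrt b,\sqrt{-1})$ of degree $8$, forcing $L=F(\sqrt a,\sqrt b,\sqrt{-1})$ to be abelian, contrary to $\Gal(L/F)\isom D_4$. Thus the extensions that constrain (b) are exactly those recording the elements of $C$, and (b)$\Leftrightarrow$(c) follows.
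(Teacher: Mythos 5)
Your proof is correct, and while your (a)$\Leftrightarrow$(b) step coincides with the paper's (both reduce, via Theorems \ref{intersection for p odd} and \ref{intersection for p=2}, to the remark that every listed quotient other than $\bar G$ is abelian), your route through (b)$\Leftrightarrow$(c) is genuinely different. The paper argues pair-by-pair: for (b)$\Rightarrow$(c) it produces a specific $\tau$ fixing $\root p\of a$ and moving $\root p\of b$, and applies a Frattini argument to the proper subgroup generated by the two commuting restrictions; for (c)$\Rightarrow$(b) it runs a three-case analysis whose Case 3 (namely $p=2$, $(-1)_F\neq 0$, with $(a)_F,(b)_F,(-1)_F$ dependent) is settled by hand, tracking $\sig(\sqrt a)$ and $\tau(\sqrt a)$. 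You instead reformulate (b) as a centrality statement, which requires identifying exactly when $\widetilde G\to\Gal(L/F)$ is onto: your dichotomy by whether $\sqrt{-1}\in L$ --- extensions containing $\sqrt{-1}$ impose no condition because $\widetilde G$ restricts into the proper, hence abelian, subgroup $\Gal(L/F(\sqrt{-1}))$, while the remaining ones admit a surjection from $\widetilde G$ --- is precisely what replaces the paper's Case 3 and its Frattini argument, and it makes the two directions symmetric. The paper's pairwise argument buys uniformity (no surjectivity discussion is needed); yours buys a cleaner conceptual picture of which extensions actually constrain $\sig$. Two points are left implicit in your write-up, but both are one-line checks: (i) for (c)$\Rightarrow$(b) you need the unproved half of your ``exactly'' claim, namely that if $\sqrt{-1}\notin L$ then the generators $a,b$ of $L^{Z(\bar G)}$ lie in $C$ --- this holds because dependence of $(a)_F,(b)_F,(-1)_F$ would force $\sqrt{-1}\in F(\sqrt a,\sqrt b)\subseteq L$; (ii) when $p=2$ one must label $a,b$ so that $\Gal(L/F(\sqrt{ab}))$ is the cyclic index-$2$ subgroup of $D_4$, so that the embedding is proper and Lemma \ref{embedding problems}(a) indeed yields $(a)_F\cup(b)_F=0$ (the paper likewise chooses such $a,b$ without comment).
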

\begin{proof}
(a)$\Leftrightarrow$(b): \quad
Let $\tau\in\widetilde G$.
By Theorem \ref{intersection for p odd} and Theorem \ref{intersection for p=2},
$[\sig,\tau]\in G_{(3)}$ if and only if $\sig|_L,\tau|_L$ commute in $\Gal(L/F)$ for every Galois extension $L/F$ with
Galois group in $\{1,\dbZ/p,H_{p^3}\}$, when $p>2$, or in $\{1,\dbZ/2,\dbZ/4,D_4\}$, when $p=2$.
When $\Gal(L/F)$ is abelian, the commutativity is trivial, so it is enough to consider $\bar G$-extensions $L/F$.

\medskip

(b)$\Rightarrow$(c):\quad
Let $a\in C$ and take $b\in F^\times$ as in the definition of $C$.
Then when $(-1)_F=0$ (resp., $(-1)_F\neq0$) the Kummer elements $(a)_F,(b)_F$ (resp., $(a)_F,(b)_F,(-1)_F$)
are $\dbF_p$-linearly independent.
Hence there exists $\tau\in \widetilde G$
such that $\tau(\root p\of a)=\root p\of a$ and $\tau(\root p\of b)\neq\root p\of b$.
Moreover, $(a)_F\cup(b)_F=0$, so Lemma \ref{embedding problems}(a) yields a $\bar G$-extension $L/F$
with $F(\root p\of a,\root p\of b)\subseteq L$.
By assumption, the restrictions $\sig|_L,\tau|_L$ commute.
But $\bar G$ is non-commutative, so these restrictions belong to a proper subgroup of $\Gal(L/F)$.
By the Frattini argument, their restrictions $\sig_1,\tau_1$ to
$\Gal(F(\root p\of a,\root p\of b)/F)\isom(\dbZ/p)^2$
belong to a proper subgroup, which is necessarily cyclic of order $p$.
Thus $\sig_1\in\langle\tau_1\rangle$, whence $\sig(\root p\of a)=\sig_1(\root p\of a)=\root p\of a$, as desired.

\medskip

(c)$\Rightarrow$(b):\quad
Let $\tau\in\widetilde G$ and let $L$ be a $\bar G$-extension of $F$.
Take $a,b\in F^\times$ such that $L_0=F(\root p\of a,\root p\of b)$ is a $(\dbZ/p)^2$-extension of $F$
which embeds properly in $L$.
By Lemma \ref{embedding problems}(a),  $(a)_F\cup(b)_F=0$.
In view of the structure of $\bar G$, the center $Z(\Gal(L/F))$ is $\Gal(L/L_0)$.

\textbf{Case 1:} \textsl{$\sig|_{L_0},\tau|_{L_0}$ do not generate $\Gal(L_0/F)$.}\
Then $\sig|_L,\tau|_L$ generate a proper subgroup of $\Gal(L/F)\isom \bar G$, which is necessary commutative.
Thus $\sig|_L,\tau|_L$ commute, as required.

\textbf{Case 2:} \textsl{$a,b\in C$.}\
By assumption, $(\sig,a)=(\sig,b)=1$.
Therefore $\sig|_L\in\Gal(L/L_0)=Z(\Gal(L/F))$, and we are done again.

\textbf{Case 3:} \textsl{$\sig|_{L_0},\tau|_{L_0}$ generate $\Gal(L_0/F)$ and at least one of $a,b$ is not in $C$.} \
By construction, $(a)_F,(b)_F$ are $\dbF_2$-linearly independent.
Hence necessarily $(-1)_F\neq0$, $p=2$, and $(a)_F,(b)_F,(-1)_F$ are $\dbF_2$-linearly dependent.
Without loss of generality, $(a)_F\neq(-1)_F$.
We obtain that
\[
\Gal(L/F(\sqrt a,\sqrt{-1}))=\Gal(L/L_0)=Z(\Gal(L/F)).
\]
If $\sig(\sqrt a)=\sqrt a$, then (as $\sig\in \widetilde G$),
\[
\sig|_L\in\Gal(L/F(\sqrt a,\sqrt{-1}))=Z(\Gal(L/F)).
\]
Similarly, if $\tau(\sqrt a)=\sqrt a$, then $\tau|_L\in Z(\Gal(L/F))$, and in both cases we are done.

Finally, if $\sig(\sqrt a)=\tau(\sqrt a)=-\sqrt a$, then $\sig,\tau$ coincide on $F(\sqrt a,\sqrt{-1})=L_0$.
Hence $\sig|_L,\tau|_L$  generate a proper subgroup of $\Gal(L/F)\isom\bar G$, which is necessarily abelian.
Therefore they commute.
\end{proof}

\begin{cor}
\label{annihilator of Np}
The annihilator of $N_p$ in $G$ with respect to the Kummer pairing is
\[
\widetilde Z=\begin{cases}
T\cap\widetilde G, & \mathrm{if\ } (-1)_F\neq 0 \mathrm{\ and\ } N_2=\langle D,-1\rangle \\
\{\sig\in \widetilde G\  |\ \forall\tau\in\widetilde G:\ [\sig,\tau]\in G_{(3)}\}, &  \rm otherwise.
\end{cases}
\]
\end{cor}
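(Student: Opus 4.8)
The plan is to compute the annihilator of $N_p$ by using the structural description of $N_p$ provided by Corollary \ref{three possibilities}, which tells us that exactly one of two cases holds. In both cases, $N_p$ is generated by certain distinguished elements, and annihilators turn an abstract group generated by a collection of elements into an intersection of their individual annihilators; so the strategy is to reduce the computation on the generating set and then invoke the already-established lemmas identifying the relevant annihilators.

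First I would recall that the right kernel of the Kummer pairing is $(F^\times)^p$, so the pairing factors through $F^\times/(F^\times)^p$ and the annihilator of any subgroup automatically contains the annihilator of $(F^\times)^p$, namely all of $G^{(2)}$; in particular $(F^\times)^p$ and $-1$ contribute nothing new (note $(-1)_F = 0$ forces $-1 \in (F^\times)^p$, and when $(-1)_F \neq 0$ the element $-1$ lies in $D$ anyway). This lets me drop the $(F^\times)^p$ and $-1$ generators throughout. In case (1) of Corollary \ref{three possibilities}, we have $N_p = \langle (F^\times)^p, C, -1\rangle$, so the annihilator of $N_p$ equals the annihilator of $C$. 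By Proposition \ref{annihilator of C}, the condition $(\sigma, a) = 1$ for all $a \in C$ is equivalent (for $\sigma \in \widetilde G$) to condition (a) there, namely $[\sigma, \tau] \in G_{(3)}$ for all $\tau \in \widetilde G$. This is exactly the second branch of the definition of $\widetilde Z$, and since case (1) includes the situation $(-1)_F = 0$ as well as the subcase $(-1)_F \neq 0$ with $N_2 = \langle (F^\times)^2, C, -1\rangle$, it matches the ``otherwise'' clause.

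Next, in case (2) we have $p = 2$, $(-1)_F \neq 0$, and $N_2 = \langle D, -1\rangle$, so the annihilator of $N_2$ equals the annihilator of $D$. By Lemma \ref{annihilator of D}, the annihilator of $D$ with respect to the Kummer pairing is $T$. However, the claimed value of $\widetilde Z$ in this case is $T \cap \widetilde G$, not $T$ itself; the subtlety is that we are computing the annihilator \emph{inside $\widetilde G$} (all the relevant $\sigma$ are required to lie in $\widetilde G$, reflecting that rigidity is measured over $F(\sqrt{-1})$ when $p = 2$). So I would intersect with $\widetilde G$ to land in the correct ambient group, giving precisely the first branch $T \cap \widetilde G$.

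The main obstacle I anticipate is bookkeeping around the role of $\widetilde G$: one must verify that restricting attention to $\sigma \in \widetilde G$ is consistent with the global annihilator computation, and in particular that no elements of $G \setminus \widetilde G$ are silently lost or incorrectly included. In case (1) this is handled because Proposition \ref{annihilator of C} is already phrased for $\sigma \in \widetilde G$; in case (2) it surfaces as the explicit $T \cap \widetilde G$ rather than $T$. I would also take care that the two cases of Corollary \ref{three possibilities} are genuinely exhaustive and that the hypotheses ($(-1)_F \neq 0$, $N_2 = \langle D, -1\rangle$) distinguishing the first branch of $\widetilde Z$ align exactly with case (2), so that the piecewise definition is well posed. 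Beyond this, the argument is a direct concatenation of Corollary \ref{three possibilities}, Lemma \ref{annihilator of D}, and Proposition \ref{annihilator of C}, with no heavy computation required.
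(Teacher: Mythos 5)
Your overall skeleton matches the paper's proof (split via Corollary \ref{three possibilities}, then invoke Proposition \ref{annihilator of C} in case (1) and Lemma \ref{annihilator of D} in case (2)), but there is a genuine gap in how you dispose of the generator $-1$. Your justification for dropping it is that ``when $(-1)_F\neq 0$ the element $-1$ lies in $D$ anyway''; this is false in general, since $-1\in D$ means $(-1)_F\cup(-1)_F=0$, which fails, e.g., for $F=\mathbb{R}$ or $F=\mathbb{Q}_2$ (the quaternion algebra $(-1,-1)$ is nonsplit there). More importantly, the generator $-1$ is not redundant: it is precisely what forces the annihilator of $N_p$ --- computed in $G$, as the statement requires --- to be contained in $\widetilde G$ when $p=2$ and $(-1)_F\neq0$, because $(\sigma,-1)=1$ exactly when $\sigma$ fixes $\sqrt{-1}$, i.e.\ $\sigma\in G_{F(\sqrt{-1})}=\widetilde G$. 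This is the one-line observation that opens the paper's proof: since $-1\in N_p$, the annihilator of $N_p$ is contained in $\widetilde G$.

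Without that observation, both of your cases leak. In case (1) with $(-1)_F\neq0$, you equate the annihilator of $N_2$ with the annihilator of $C$ in $G$ and then apply Proposition \ref{annihilator of C}; but that proposition only describes $\{\sigma\in\widetilde G\ |\ (\sigma,a)=1\ \forall a\in C\}$ and says nothing about possible elements of $G\setminus\widetilde G$ that annihilate $C$ --- your remark that the proposition ``is already phrased for $\sigma\in\widetilde G$'' does not exclude such elements; only the condition $(\sigma,-1)\neq1$ does. In case (2), after dropping $-1$ you obtain the annihilator $T$ of $D$, and then intersect with $\widetilde G$ on the grounds that ``we are computing the annihilator inside $\widetilde G$''; but the corollary asserts what the annihilator in $G$ is, so this step is circular --- it assumes exactly the containment that the generator $-1$ is needed to provide. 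The fix is short: keep $-1$, note that annihilating it is equivalent to membership in $\widetilde G$, and only then reduce to the annihilator of $C$ (resp.\ $D$) within $\widetilde G$; with that correction your argument coincides with the paper's.
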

\begin{proof}
First we note that, since $-1\in N_p$, the annihilator of $N_p$ is contained in $\widetilde G$.
Now in case (1) (resp., (2)) of Corollary \ref{three possibilities},
the assertion follows from Proposition \ref{annihilator of C} (resp., Lemma \ref{annihilator of D}).
\end{proof}

Next let $\bar Z$ be the image of $\widetilde Z$ under the natural projection $G\to G^{[2]}=G/G^{(2)}$.
Then $\bar Z\isom\widetilde Z/(G^{(2)}\cap\widetilde Z)$.
Note that if $(-1)_F=0$, then $\bar Z$ is just the image of $Z(G_{[3]})$ in $G^{[2]}$.

\begin{cor}
\label{duality for Np}
The Kummer pairing induces a perfect pairing
\[
\bar Z\times(F^\times/N_p)\to\mu_p.
\]
\end{cor}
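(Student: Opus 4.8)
The plan is to deduce the corollary directly from Corollary \ref{annihilator of Np} by the standard annihilator/quotient duality under the Kummer pairing. Recall that the Kummer pairing has left kernel $G^{(2)}$ and right kernel $(F^\times)^p$, so it factors through a perfect pairing
\[
G^{[2]}\times (F^\times/(F^\times)^p)\to\mu_p,
\]
exhibiting the profinite $\dbF_p$-vector space $G^{[2]}$ and the discrete $\dbF_p$-vector space $F^\times/(F^\times)^p$ as Pontryagin duals of one another.

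First I would record that $(F^\times)^p\leq N_p$: by definition every $p$-rigid element $a$ satisfies $(a)_F\neq0$, hence each element of $(F^\times)^p$ is non-$p$-rigid and lies in $N_p$. Thus $\bar N_p:=N_p/(F^\times)^p$ is a subspace of $F^\times/(F^\times)^p$, and the third isomorphism theorem supplies a canonical identification $(F^\times/(F^\times)^p)/\bar N_p\isom F^\times/N_p$.

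Next I would identify $\bar Z$ with the annihilator $\bar N_p^\perp$ of $\bar N_p$ inside $G^{[2]}$. Since the pairing factors through $G^{[2]}$, an element $\sig\in G$ annihilates $N_p$ if and only if its image in $G^{[2]}$ annihilates $\bar N_p$; that is, $\widetilde Z$, which is the annihilator of $N_p$ in $G$ by Corollary \ref{annihilator of Np}, is the full preimage of $\bar N_p^\perp$ under $G\to G^{[2]}$. As $G^{(2)}\leq\widetilde Z$ (being the left kernel), its image $\bar Z=\widetilde Z/(G^{(2)}\cap\widetilde Z)$ is exactly $\bar N_p^\perp$. Then the perfect pairing above restricts to a perfect pairing between the annihilator $\bar N_p^\perp$ and the complementary quotient $(F^\times/(F^\times)^p)/\bar N_p$, and under the identifications above this is precisely the asserted perfect pairing $\bar Z\times(F^\times/N_p)\to\mu_p$.

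The only point requiring care, and the one I would flag as the main (if modest) obstacle, is the verification that $\bar Z$ equals the annihilator $\bar N_p^\perp$ rather than merely being contained in it; this is exactly where the fact that $G^{(2)}$ is the left kernel is used, together with Corollary \ref{annihilator of Np}. Once this is in place, perfectness is the routine duality between a closed subgroup and the complementary quotient under a Pontryagin pairing, with the double-annihilator identity $\bar N_p^{\perp\perp}=\bar N_p$ holding automatically because $F^\times/N_p$ is discrete.
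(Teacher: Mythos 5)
Your proof is correct and takes essentially the same route as the paper: pass to the induced perfect pairing $G^{[2]}\times(F^\times/(F^\times)^p)\to\mu_p$, identify $\bar Z$ as the annihilator of $N_p/(F^\times)^p$ via Corollary \ref{annihilator of Np}, and conclude by general Pontrjagin duality. The paper's proof is just a terser version of yours, leaving implicit the points you spell out (that $(F^\times)^p\leq N_p$ and that $\bar Z$ equals, rather than merely lies in, the annihilator in $G^{[2]}$).
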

\begin{proof}
The Kummer pairing induces a perfect pairing
\[
G^{[2]}\times(F^\times/(F^\times)^p)\to\mu_p.
\]
By Corollary \ref{annihilator of Np}, the annihilator of $N_p/(F^\times)^p$ is $\bar Z$.
The assertion now follows from general Pontrjagin duality theory.
\end{proof}

\section{Rigid fields}
The field $F$ is called \textbf{$p$-rigid} if all $a\in F^\times\setminus(F^\times)^p$ are $p$-rigid.
The next result applies Corollary \ref{duality for Np} to characterize these fields in terms of $G_{[3]}$.
For $p>2$ the equivalence (a)$\Leftrightarrow$(e) was proved in \cite[Th.\ 14]{MassyNguyenQuangDo77}; see also
\cite{Ware92}.
For $p=2$ the equivalences (a)$\Leftrightarrow$(c)$\Leftrightarrow$(d) where earlier proved in
\cite[Th.\ 3.13]{MinacSpira90}.

\begin{thm}
Assume that $(-1)_F=0$.
The following conditions are equivalent:
\begin{enumerate}
\item[(a)]
$F$ is $p$-rigid;
\item[(b)]
$N_p=(F^\times)^p$;
\item[(c)]
$G_{[3]}$ is abelian;
\item[(d)]
When $p>2$ (resp., $p=2$), $G_{[3]}\isom(\dbZ/p)^I$ (resp., $G_{[3]}\isom(\dbZ/4)^I$) for some index set $I$;
\item[(e)]
$F$ has no $\bar G$-extensions.
\end{enumerate}
\end{thm}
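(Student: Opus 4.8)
The plan is to establish the chain of equivalences (a)$\Leftrightarrow$(b)$\Leftrightarrow$(c)$\Leftrightarrow$(d)$\Leftrightarrow$(e), working under the standing assumption $(-1)_F=0$, which forces $\widetilde G=G$ and places us squarely in case (1) of Corollary \ref{three possibilities}, so that $N_p=\langle(F^\times)^p,C,-1\rangle=\langle(F^\times)^p,C\rangle$.

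First I would dispatch (a)$\Leftrightarrow$(b) directly from the definitions. By definition, $F$ is $p$-rigid exactly when every $a\in F^\times\setminus(F^\times)^p$ is $p$-rigid, i.e.\ when no such $a$ fails to be $p$-rigid. Since $(-1)_F=0$ means $-1\in(F^\times)^p$, the generator $-1$ of $N_p$ is already in $(F^\times)^p$, so $N_p=\langle(F^\times)^p,C\rangle$; and by Lemma \ref{rigidity and Galois groups} (in the $(-1)_F=0$ case, where the statement is immediate), the non-$p$-rigid elements outside $(F^\times)^p$ are precisely those in $C$. Thus $F$ is $p$-rigid iff $C\subseteq(F^\times)^p$ iff $N_p=(F^\times)^p$.

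Next I would use the duality of Corollary \ref{duality for Np} to link (b) with the structure of $\bar Z$, hence with (c). Since $(-1)_F=0$, the remark before Corollary \ref{duality for Np} tells us $\bar Z$ is exactly the image of $Z(G_{[3]})$ in $G^{[2]}$, and we are in the ``otherwise'' branch of Corollary \ref{annihilator of Np}, so $\widetilde Z=\{\sig\in G\mid\forall\tau:\ [\sig,\tau]\in G_{(3)}\}$, the preimage of $Z(G_{[3]})$. The perfect pairing $\bar Z\times(F^\times/N_p)\to\mu_p$ shows that $N_p=(F^\times)^p$ holds iff $F^\times/N_p=F^\times/(F^\times)^p$ is as large as possible, which by perfectness happens iff $\bar Z$ is all of $G^{[2]}$. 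Translating back, $\bar Z=G^{[2]}$ means every $\sig\in G$ satisfies $[\sig,\tau]\in G_{(3)}$ for all $\tau$, i.e.\ $[G,G]\subseteq G_{(3)}$, which is exactly the statement that $G_{[3]}=G/G_{(3)}$ is abelian. This gives (b)$\Leftrightarrow$(c).

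For (c)$\Leftrightarrow$(d) I would argue that an abelian $G_{[3]}$ is determined up to the exponent: $G_{[3]}$ always has exponent $\delta p$, so an abelian $G_{[3]}$ is a product of cyclic groups of order dividing $\delta p$; the constraints on the defining filtration (in particular that $G^{(2)}/G_{(3)}$ injects appropriately) force each factor to have order exactly $p$ when $p>2$ and exactly $4$ when $p=2$, yielding the stated form, while the converse is trivial. The hard part, and the step I would handle most carefully, is (b)/(c)$\Leftrightarrow$(e), the geometric characterization via $\bar G$-extensions. Here I would show that $C\subseteq(F^\times)^p$ is equivalent to the nonexistence of $\bar G$-extensions: if $F$ has a $\bar G$-extension $L$, then by Lemma \ref{embedding problems}(a) a properly embedded $(\dbZ/p)^2$-subextension $F(\root p\of a,\root p\of b)$ produces $\dbF_p$-independent $(a)_F,(b)_F$ with $(a)_F\cup(b)_F=0$, witnessing $a\in C\setminus(F^\times)^p$; conversely, any $a\in C$ supplies such $a,b$ and Lemma \ref{embedding problems}(a) builds a $\bar G$-extension. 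The main obstacle will be keeping the $p=2$ bookkeeping honest under $(-1)_F=0$ and confirming that ``proper'' embedding is automatically available here, so that the cup-product vanishing condition of Lemma \ref{embedding problems}(a) transfers cleanly in both directions.
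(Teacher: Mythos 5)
Two of your steps contain genuine gaps. First, in (b)$\Rightarrow$(c) your ``translation'' is incorrect: $\bar Z$ is by definition the image of $\widetilde Z$ in $G^{[2]}=G/G^{(2)}$, so $\bar Z=G^{[2]}$ says only that $G=\widetilde Z\, G^{(2)}$, equivalently that $Z(G_{[3]})$ surjects onto $G_{[3]}/\Phi(G_{[3]})$ (note $G^{(2)}/G_{(3)}=\Phi(G_{[3]})$); it does \emph{not} say that every $\sig\in G$ satisfies $[\sig,\tau]\in G_{(3)}$ for all $\tau$. To conclude that $G_{[3]}$ is abelian you need an additional argument --- the paper invokes the Frattini argument ($Z(G_{[3]})\Phi(G_{[3]})=G_{[3]}$ forces $Z(G_{[3]})=G_{[3]}$); alternatively one can note that $G_{[3]}=Z(G_{[3]})\cdot(G^{(2)}/G_{(3)})$ with the second factor abelian, since $[G^{(2)},G^{(2)}]\leq[G^{(2)},G]\leq G_{(3)}$. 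Either way, a step is missing. Second, in (c)$\Rightarrow$(d) for $p=2$, your appeal to ``constraints on the defining filtration'' cannot work, because the absence of $\dbZ/2$ direct factors is not a group-theoretic consequence of the filtration: the pro-$2$ group $G=\dbZ/2$ (the absolute Galois group of $\mathbb{R}$) has $G_{[3]}=\dbZ/2$ --- of course $-1$ is not a square there, which is exactly the point. The necessary input is arithmetic: since $(-1)_F=0$, one has $(a)_F\cup(-1)_F=0$ for all $a$, so by Lemma \ref{embedding problems}(b) every $\dbZ/2$-extension of $F$ embeds in a $\dbZ/4$-extension, i.e.\ every character $G\to\dbZ/2$ lifts to $\dbZ/4$ (and such lifts factor through $G_{[3]}$, as $\dbZ/4$ is abelian of exponent $4$); a $\dbZ/2$ direct factor of an abelian $G_{[3]}$ would produce a character admitting no lift. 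Your proposal never invokes $(-1)_F=0$ or Lemma \ref{embedding problems}(b) at this step, so the implication is unproven.

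Your treatment of (e) genuinely differs from the paper's: the paper gets (e)$\Rightarrow$(c) immediately from Theorems \ref{intersection for p odd} and \ref{intersection for p=2}, and (c)$\Rightarrow$(e) from the fact that any epimorphism $G\to\bar G$ kills $G_{(3)}$, whereas you relate (e) to (b) through the set $C$ and Lemma \ref{embedding problems}(a). Your direction ``$C\neq\emptyset$ implies a $\bar G$-extension exists'' is sound, but the converse rests on exactly the claim you only flag as an ``obstacle'': every $\bar G$-extension $E/F$ contains a \emph{properly} embedded $(\dbZ/p)^2$-subextension. For $p>2$ this is automatic from the definition of proper, but for $p=2$ you must choose the Kummer generators $a,b$ among the three quadratic subextensions of the fixed field of $Z(\Gal(E/F))$ so that $F(\sqrt{ab})$ is the one fixed by the unique cyclic index-$2$ subgroup of $D_4$. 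Until that relabeling argument is written out, your (b)$\Rightarrow$(e) is incomplete; the paper's route avoids this bookkeeping entirely, at the cost of quoting the two intersection theorems.
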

\begin{proof}
(a)$\Leftrightarrow$(b):  \quad Immediate.

\medskip

(b)$\Leftrightarrow$(c):\quad
By Corollary \ref{duality for Np}, $N_p=(F^\times)^p$ if and only if $\bar Z\isom G^{[2]}$,
i.e., the natural map $G_{[3]}\to G^{[2]}$ maps $Z(G_{[3]})$ surjectively.
By the Frattini argument, this means that $G_{[3]}=Z(G_{[3]})$.

\medskip

(c)$\Rightarrow$(d): \quad
When $p>2$, we use that abelian profinite groups of exponent dividing $p$ always have the form $(\dbZ/p)^I$.
Similarly, when $p=2$ the group $G_{[3]}$ has exponent dividing $4$, and by assumption is abelian.
Moreover, since $(-1)_F=0$, every $\dbZ/2$-extension embeds in
a $\dbZ/4$-extension (Lemma \ref{embedding problems}(b)).
Hence $G_{[3]}$ has the form $(\dbZ/4)^I$.

\medskip

(d)$\Rightarrow$(c)$\Rightarrow$(e):  \quad
Immediate.

\medskip

(e)$\Rightarrow$(c): \quad
Use Theorem  \ref{intersection for p odd} (when $p>2$) and Theorem \ref{intersection for p=2} (when $p=2$).
\end{proof}

\begin{rem}
\rm
An analogous result was proved in \cite[Prop. 12.1 and Prop.\ 3.2]{EfratMinac1} for the larger quotient $G/G^{(3)}$,
where $G^{(3)}=(G^{(2)})^p[G^{(2)},G]$ is the third subgroup in the descending $p$-central filtrartion of $G=G_F$:
namely, when $p>2$ (resp., $p=2$) $G/G^{(3)}$ is abelian if and only if $F$ has no Galois extensions with Galois group
$M_{p^3}$  (resp., $D_4$), where $M_{p^3}$ denotes the unique nonabelian group of odd order $p^3$ and exponent $p^2$.
Note that indeed $G^{(3)}=G_{(3)}$ for $p=2$, by \cite[Remark 2.1(1)]{EfratMinac2}.
\end{rem}

\section{Valuations}
\label{section on valuations}
Throughout this section we assume that $(-1)_F=0$.
As we mentioned earlier, existence of $(F^\times)^p$-compatible valuations $v$ with $v(F^\times)\neq pv(F^\times)$
is related to $p$-rigid elements, and therefore to the group $N_p$.
Further, $F^\times/N_p$ is dual to the image $\bar Z$ of $Z(G^{[3]})$ in $G^{[2]}$ (Corollary \ref{duality for Np}).
Thus we can now detect these valuations from our Galois group $G^{[3]}$ under some finiteness conditions discussed below.

Denote the exterior (graded) algebra of an $R$-module $M$ by $\bigwedge^*_RM$.
There is a canonical graded ring epimorphism
$\bigwedge^*_{\dbF_p}(F^\times/(F^\times)^p)\to K^M_*(F)/pK^M_*(F)$.
We say that $(F^\times)^p$ is \textbf{totally rigid} if this map is an isomorphism
(see \cite[\S26.3 and Example 23.2.4]{EfratBook}).

\begin{exa}
\rm
Suppose that $F$ is equipped with an $(F^\times)^p$-compatible valuation $v$ such that $\bar F_v^\times=(\bar F_v^\times)^p$
and such that the induced map $F^\times/(F^\times)^p\xrightarrow{\sim}v(F^\times)/pv(F^\times)$ is an isomorphism.
E.g., this holds for $F=\dbC((t_1))\cdots((t_n))$.
In the terminology of \cite[\S23.2]{EfratBook}, let $\mathbf{0}[v(F^\times)/pv(F^\times)]$ be the extension of the trivial
$\kappa$-structure $\bf0$ by the abelian group $v(F^\times)/pv(F^\times)$.
One has
\[
\mathbf{0}[v(F^\times)/pv(F^\times)]=\textstyle\bigwedge^*_{\dbF_p}(v(F^\times)/pv(F^\times))
\]
 as graded rings \cite[Example 23.2.4]{EfratBook}.
Further, there is a natural isomorphism
\[
K^M_*(F)/pK^M_*(F)\xrightarrow{\sim}\mathbf{0}[v(F^\times)/pv(F^\times)]
\]
\cite[Th.\ 26.1.2 and Example 26.1.1(2)]{EfratBook}.
Hence $(F^\times)^p$ is totally rigid.
See  also \cite[\S26.8]{EfratBook}.
\end{exa}

For a valuation $v$ on $F$ let $\bar F_v$ be its residue field, and let $O_v^\times$ be its group of $v$-units.
We will need the following special cases of \cite[Prop.\ 26.5.1, Th.\ 26.5.5(c), Th.\ 26.6.1]{EfratBook}, respectively:

\begin{prop}
\label{thm from Efr06}
\begin{enumerate}
\item[(a)]
If $v$ is an $(F^\times)^p$-compatible valuation on $F$, then $N_p\leq (F^\times)^pO_v^\times$.
\item[(b)]
If $(F^\times)^p$ is not totally rigid, and either $p=2$ or $(F^\times:(F^\times)^p)<\infty$, then
there exists an $(F^\times)^p$-compatible valuation $v$ on $F$ with $N_p=(F^\times)^pO_v^\times$.
\item[(c)]
If $(F^\times)^p$ is totally rigid, then there exists an $(F^\times)^p$-compatible valuation $v$ on $F$ with
$((F^\times)^pO_v^\times:N_p)|p$.
\end{enumerate}
\end{prop}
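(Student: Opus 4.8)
The plan is to treat part (a) separately from the realization statements (b) and (c), which are considerably deeper and are exactly where the cited machinery of \cite{EfratBook} does its real work. Since we are under the standing hypothesis $(-1)_F=0$, I may freely use $-1\in(F^\times)^p$ and $(-a)_F=(a)_F$.

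For (a) I would prove the contrapositive of the implication \emph{value not $p$-divisible $\Rightarrow$ $p$-rigid}. As $-1\in(F^\times)^p\subseteq (F^\times)^pO_v^\times$, it suffices to show that every non-$p$-rigid $a$ satisfies $v(a)\in pv(F^\times)$, i.e. $a\in(F^\times)^pO_v^\times$. Suppose instead $v(a)\notin pv(F^\times)$; I claim $a$ is $p$-rigid. First $a\notin(F^\times)^p$, so $(a)_F\neq0$. Now if $(a)_F\cup(b)_F=0$ then $b$ is a norm from $L=F(\root p\of a)$. Because $v(a)\notin pv(F^\times)$, the extension $L/F$ is totally ramified of degree $p$; letting $w$ be the unique extension of $v$ to $L$ and $\pi=\root p\of a$, one has $w(\pi)$ of order $p$ modulo $v(F^\times)$, so any $y\in L^\times$ writes as $y=\pi^j f\epsilon$ with $f\in F^\times$ and $\epsilon\in O_w^\times$. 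Here $N_{L/F}(\pi)\in a(F^\times)^p$, and since the residue extension is trivial one computes $\overline{N_{L/F}(\epsilon)}=\bar\epsilon^{\,p}$; lifting $\bar\epsilon$ to a $v$-unit and using $(F^\times)^p$-compatibility to absorb $1+\grm_v$ into $(F^\times)^p$ gives $N_{L/F}(\epsilon)\in(F^\times)^p$. Hence $b=N_{L/F}(y)\in\langle a\rangle(F^\times)^p$, so $(a)_F,(b)_F$ are $\dbF_p$-dependent and, as $(-a)_F=(a)_F$, no $b$ witnesses non-rigidity. The contrapositive yields $N_p\leq(F^\times)^pO_v^\times$.

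Parts (b) and (c) run the opposite way: I must \emph{construct} a valuation from the rigidity data so that $(F^\times)^pO_v^\times$ recovers $N_p$ exactly (b) or up to index dividing $p$ (c). The strategy is the standard realization of valuations from rigid elements: set $T=N_p$, note that by Lemma \ref{rigidity and Galois groups} (and $(-1)_F=0$, so $-T=T$) the elements of $F^\times\setminus T$ are precisely the $p$-rigid ones, and build a valuation ring $O_v$ whose units modulo $p$-th powers reproduce $T$. The crux, which I expect to be the main obstacle, is showing that the candidate set is closed under addition: this is exactly the point at which the cohomological rigidity condition ($(a)_F\cup(b)_F=0\Rightarrow$ dependence) is converted into the additive structure of a valuation ring, with $(F^\times)^p$-compatibility of the resulting $v$ falling out of the construction.

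The split between (b) and (c), and the side hypotheses, enter in the final bookkeeping. When $(F^\times)^p$ is \emph{not} totally rigid there is genuine slack in the cup-product relations, and this extra freedom lets one pin the unit group down exactly, giving $N_p=(F^\times)^pO_v^\times$; the assumption that $p=2$ or $(F^\times:(F^\times)^p)<\infty$ is what guarantees the needed value group is realizable, finiteness permitting an induction on $\dim_{\dbF_p}\bigl(v(F^\times)/pv(F^\times)\bigr)$ and $p=2$ admitting the quadratic-form shortcut behind Proposition \ref{Berman}. When $(F^\times)^p$ \emph{is} totally rigid the construction may overshoot by a single factor of $p$, coming from a residue contribution $\bar F_v^\times/(\bar F_v^\times)^p$ that need not vanish, whence only $((F^\times)^pO_v^\times:N_p)\mid p$. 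Since (b) and (c) are packaged as special cases of the realization theorems of \cite{EfratBook}, I would give the self-contained argument only for (a) and invoke those results for (b) and (c).
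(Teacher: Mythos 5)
Your proposal is correct, but it is worth noting that the paper itself gives no proof of this Proposition at all: it is stated purely as a special case of three results in \cite{EfratBook} (Prop.\ 26.5.1, Th.\ 26.5.5(c), Th.\ 26.6.1, one per part), so the paper's ``proof'' is a citation. Your treatment of (b) and (c) therefore coincides with the paper's route --- your intermediate heuristics (rigid elements giving a valuation ring, the closure-under-addition crux, the role of the finiteness hypothesis in a chain/induction argument, matching the paper's closing Remark) are only commentary on what the cited theorems do, and you correctly do not rely on them. Where you genuinely diverge is part (a), for which you supply a self-contained argument instead of the citation, and your argument is sound: under the section's standing hypothesis $(-1)_F=0$ one indeed reduces to showing that $v(a)\notin pv(F^\times)$ forces $a$ to be $p$-rigid; the identification of $(a)_F\cup(b)_F=0$ with $b\in N_{L/F}(L^\times)$ for $L=F(\root p\of a)$ is the classical cyclic-algebra criterion (valid since $\zeta_p\in F$); the fundamental inequality gives the unique, totally ramified extension $w$ with trivial residue extension; $\Gal(L/F)$ then acts trivially on residues, so $\overline{N_{L/F}(\epsilon)}=\bar\epsilon^{\,p}$, and $(F^\times)^p$-compatibility absorbs the discrepancy $N_{L/F}(\epsilon)/u^p\in 1+\grm_v$ into $(F^\times)^p$; finally $N_{L/F}(\pi)\in a(F^\times)^p$ (using $-1\in(F^\times)^p$ when $p=2$) gives $b\in\langle a\rangle(F^\times)^p$, so no witness to non-rigidity exists. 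What each approach buys: the paper's citation is uniform and keeps all the valuation-theoretic content in one reference, while your proof of (a) makes the easy containment transparent and shows it requires only classical ramification theory and the norm criterion, with none of the $K$-theoretic machinery needed for the converse constructions in (b) and (c).
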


We obtain our main result:

\begin{thm}
\label{existence of valuations}
\begin{enumerate}
\item[(a)]
If $v$ is an $(F^\times)^p$-compatible valuation on $F$, then $v(F^\times)/pv(F^\times)$ is a quotient of
the Pontrjagin dual $\bar Z^\vee$.
\item[(b)]
Assume that $(F^\times)^p$ is not totally rigid, and  that $p=2$ or $(F^\times:(F^\times)^p)<\infty$.
Then there exists an $(F^\times)^p$-compatible valuation $v$ on $F$ with
$v(F^\times)/pv(F^\times)\isom\bar Z^\vee$.
\item[(c)]
Assume that $(F^\times)^p$ is totally rigid.
There exists an $(F^\times)^p$-compatible valuation $v$ on $F$ and an epimorphism
$\bar Z^\vee\to v(F^\times)/pv(F^\times)$ with kernel of order dividing $p$.
\end{enumerate}
\end{thm}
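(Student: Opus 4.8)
The plan is to turn all three statements into comparisons of the two subgroups $N_p$ and $(F^\times)^pO_v^\times$ of $F^\times$, using the duality already established in Corollary \ref{duality for Np}. The first step is valuation-theoretic: for any valuation $v$ on $F$ the valuation map $v\colon F^\times\to v(F^\times)$ is onto with kernel $O_v^\times$, and composing with reduction modulo $p$ on the value group I would verify that $F^\times\to v(F^\times)/pv(F^\times)$ is onto with kernel exactly $(F^\times)^pO_v^\times$. This yields a natural isomorphism
\[
v(F^\times)/pv(F^\times)\isom F^\times/\bigl((F^\times)^pO_v^\times\bigr),
\]
so that every assertion about the value group modulo $p$ becomes an assertion about how $(F^\times)^pO_v^\times$ sits inside $F^\times$ relative to $N_p$.

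On the dual side, all groups in sight are elementary abelian $p$-groups---note $(F^\times)^p\leq N_p$, since elements of $(F^\times)^p$ have trivial Kummer class and so are not $p$-rigid---whence Pontrjagin duality into $\mu_p$ is ordinary $\dbF_p$-linear duality. Corollary \ref{duality for Np}, whose perfect pairing $\bar Z\times(F^\times/N_p)\to\mu_p$ makes each factor the dual of the other, then gives the identification $F^\times/N_p\isom\bar Z^\vee$. With the two isomorphisms in hand, the three parts follow directly from Proposition \ref{thm from Efr06}: part (a) from $N_p\leq(F^\times)^pO_v^\times$, which exhibits $v(F^\times)/pv(F^\times)\isom F^\times/\bigl((F^\times)^pO_v^\times\bigr)$ as a quotient of $F^\times/N_p\isom\bar Z^\vee$; part (b) from the equality $N_p=(F^\times)^pO_v^\times$, which upgrades that quotient map to an isomorphism $\bar Z^\vee\isom v(F^\times)/pv(F^\times)$; and part (c) from $\bigl((F^\times)^pO_v^\times:N_p\bigr)\mid p$, so that the canonical epimorphism $\bar Z^\vee\isom F^\times/N_p\twoheadrightarrow v(F^\times)/pv(F^\times)$ has kernel $(F^\times)^pO_v^\times/N_p$ of order dividing $p$.

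The only step that needs genuine care is the first one: I must check that reduction modulo $p$ on $v(F^\times)$ has kernel precisely $(F^\times)^pO_v^\times$, and not a proper subgroup, and that the resulting isomorphism respects inclusions of subgroups, so that the comparisons of $N_p$ with $(F^\times)^pO_v^\times$ furnished by Proposition \ref{thm from Efr06} translate faithfully into surjections and index bounds on the value-group side. Everything after that is routine bookkeeping with quotients and duals of $\dbF_p$-vector spaces, and the three cases drop out immediately.
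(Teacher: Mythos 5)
Your proposal is correct and follows exactly the paper's own argument: identify $\bar Z^\vee\isom F^\times/N_p$ via Corollary \ref{duality for Np}, use the canonical isomorphism $F^\times/(F^\times)^pO_v^\times\isom v(F^\times)/pv(F^\times)$, and then read off the three parts from Proposition \ref{thm from Efr06}(a)--(c). The only difference is that you spell out the routine verifications (the kernel computation and the elementary-abelian duality) that the paper leaves implicit.
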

\begin{proof}
By Corollary \ref{duality for Np}, $\bar Z^\vee\isom F^\times/N_p$.
Every valuation $v$ on $F$ induces an isomorphism $F^\times/(F^\times)^pO_v^\times\isom v(F^\times)/pv(F^\times)$.
Now use Proposition \ref{thm from Efr06}.
\end{proof}

From parts (a) and (b) we deduce:

\begin{cor}
\label{valuations and the dual}
Assume that $(F^\times)^p$ is not totally rigid, and  that $p=2$ or $(F^\times:(F^\times)^p)<\infty$.
Then there exists an $(F^\times)^p$-compatible valuation $v$ on $F$ with
$v(F^\times)\neq pv(F^\times)$ if and only if $\bar Z\neq \{1\}$.
\end{cor}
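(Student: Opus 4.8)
The plan is to derive Corollary \ref{valuations and the dual} directly from the two parts of Theorem \ref{existence of valuations} that precede it, converting the statement about the group $v(F^\times)/pv(F^\times)$ into the simple assertion that $\bar Z$ is nontrivial. The key observation is that for any valuation $v$, condition (i) (that is, $v(F^\times)\neq pv(F^\times)$) is equivalent to $v(F^\times)/pv(F^\times)\neq\{0\}$, since the value group is $p$-divisible precisely when this quotient vanishes. So the whole corollary reduces to tracking when $v(F^\times)/pv(F^\times)$ can be made nontrivial.

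First I would prove the ``only if'' direction. Suppose there exists an $(F^\times)^p$-compatible valuation $v$ with $v(F^\times)\neq pv(F^\times)$, so $v(F^\times)/pv(F^\times)\neq\{0\}$. By Theorem \ref{existence of valuations}(a), $v(F^\times)/pv(F^\times)$ is a quotient of $\bar Z^\vee$; a nontrivial group cannot be a quotient of the Pontrjagin dual of the trivial group, so $\bar Z^\vee\neq\{0\}$, whence $\bar Z\neq\{1\}$. This direction uses no finiteness or rigidity hypothesis and is essentially a one-line consequence of part (a).

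For the ``if'' direction I would invoke the standing hypotheses of the corollary, namely that $(F^\times)^p$ is not totally rigid and that $p=2$ or $(F^\times:(F^\times)^p)<\infty$. These are exactly the hypotheses of Theorem \ref{existence of valuations}(b), which produces an $(F^\times)^p$-compatible valuation $v$ with $v(F^\times)/pv(F^\times)\isom\bar Z^\vee$. Now if $\bar Z\neq\{1\}$, then by Corollary \ref{duality for Np} and the perfect pairing there, $\bar Z^\vee\neq\{0\}$, so $v(F^\times)/pv(F^\times)\neq\{0\}$, i.e.\ $v(F^\times)\neq pv(F^\times)$. Thus the constructed valuation satisfies (i) as required.

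The main subtlety to get right is that this is a genuine equivalence only under the totally-rigid-excluding hypothesis: part (c) of Theorem \ref{existence of valuations} (the totally rigid case) is deliberately \emph{not} used here, since there the correspondence between $\bar Z^\vee$ and $v(F^\times)/pv(F^\times)$ is only up to a kernel of order dividing $p$, and one could have $\bar Z\neq\{1\}$ while the available valuation fails condition (i). So the only real care needed is to confirm that the hypotheses of the corollary match those of part (b) verbatim and that the passage between ``$\bar Z$ nontrivial'' and ``$\bar Z^\vee$ nontrivial'' is justified by the perfection of the pairing in Corollary \ref{duality for Np}; both are routine, which is why the proof is short.
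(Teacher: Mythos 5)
Your proof is correct and matches the paper's own derivation: the paper deduces this corollary precisely from parts (a) and (b) of Theorem \ref{existence of valuations}, exactly as you do, with the translation between $\bar Z\neq\{1\}$ and $\bar Z^\vee\neq\{0\}$ handled by the duality of Corollary \ref{duality for Np}. Your added remark on why part (c) must be avoided in the totally rigid case is a sensible clarification of the hypotheses, not a deviation.
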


\begin{rem}
\rm
The finiteness assumption for $p\neq2$ in Proposition \ref{thm from Efr06}(b)
(and therefore in Theorem \ref{existence of valuations} and Corollary \ref{valuations and the dual})
originates from the chain argument in the proof of \cite[Prop.\ 26.5.4]{EfratBook}.
It is currently not known whether this assumption in Proposition \ref{thm from Efr06}(b) is actually necessary.
\end{rem}

\subsection*{Acknowledgements}
The first author was supported by the Israel Science Foundation (grant No.\ 23/09).
The second author was supported in part by National Sciences and Engineering Council of Canada grant R0370A01.
We also acknowledge the referee's suggestions which we use in our exposition.

\end{document}